\newtheorem{theorem}{Theorem}[section]
\newtheorem{lemma}[theorem]{Lemma}
\newtheorem{proposition}[theorem]{Proposition}
\newtheorem{corollary}[theorem]{Corollary}
\theoremstyle{plain}
\theoremstyle{definition}
\newtheorem{definition}[theorem]{Definition}
\numberwithin{equation}{section}
\renewcommand{\labelenumi}{\textup{(\theenumi)}}
\newcommand{\Homeo}{\operatorname{Homeo}}
\newcommand{\id}{\operatorname{id}}
\newcommand{\Ker}{\operatorname{Ker}}
\newcommand{\Ad}{\operatorname{Ad}}
\newcommand{\N}{\mathbb{N}}
\newcommand{\Z}{\mathbb{Z}}
\newcommand{\T}{\mathbb{T}}
\newcommand{\Zp}{{\mathbb{Z}}_+}
\title{ 
Flip conjugacy and asymptotic continuous orbit equivalence of Smale spaces 
}
\author{Kengo Matsumoto \\
Department of Mathematics \\
Joetsu University of Education \\
Joetsu, 943-8512, Japan
}
\date{ }
\begin{document}
\maketitle

\def\det{{{\operatorname{det}}}}

\begin{abstract} 
We study asymptotic continuous orbit equivalence of Smale spaces. 
We prove that two irreducible Smale spaces are flip conjugate if and only if 
there exists a periodic point preserving homeomorphism 
giving an asymptotic continuous orbit equivalence
between them.
We introduce a notion of asymptotic topological conjugacy and asymptotic flip
conjugacy in Smale spaces 
and characterize them in terms of the associated Ruelle algebras 
with dual actioons.
We finally characterize the flip conjugacy classes of irreducible two-sided topological Markov shifts 
in terms of the associated Ruelle algebras with its $C^*$-subalgebras.  
\end{abstract}

{\it Mathematics Subject Classification}:
 Primary 37A55; Secondary 37B10, 37D05.

{\it Keywords and phrases}:
Hyperbolic dynamics, Smale space, 
\'etale  groupoid, 
continuous orbit equivalence,
shift of finite type,  
Ruelle algerba, $C^*$-algebra.

\def\R{{\mathcal{R}}}
\def\RT{\tilde{\mathcal{R}}}

\def\OA{{{\mathcal{O}}_A}}
\def\OB{{{\mathcal{O}}_B}}
\def\RAsu{{{\mathcal{R}}_A^{s,u}}}
\def\RAC{{{\mathcal{R}}_A^\circ}}
\def\RAR{{{\mathcal{R}}_A^\rho}}
\def\RtA{{{\mathcal{R}}_{A^t}}}
\def\R{{\mathcal{R}}}
\def\RB{{{\mathcal{R}}_B}}
\def\FA{{{\mathcal{F}}_A}}
\def\FB{{{\mathcal{F}}_B}}
\def\FA{{{\mathcal{F}}_A}}
\def\FtA{{{\mathcal{F}}_{A^t}}}
\def\FB{{{\mathcal{F}}_B}}
\def\DtA{{{\mathcal{D}}_{A^t}}}
\def\FDA{{{\frak{D}}_A^{s,u}}}
\def\FDtA{{{\frak{D}}_{A^t}}}
\def\FFA{{{\frak{F}}_A^{s,u}}}
\def\FFtA{{{\frak{F}}_{A^t}}}
\def\FFB{{{\frak{F}}_B}}
\def\OZ{{{\mathcal{O}}_Z}}
\def\V{{\mathcal{V}}}
\def\E{{\mathcal{E}}}
\def\OtA{{{\mathcal{O}}_{A^t}}}
\def\SOA{{{\mathcal{O}}_A}\otimes{\mathcal{K}}}
\def\SOB{{{\mathcal{O}}_B}\otimes{\mathcal{K}}}
\def\SOZ{{{\mathcal{O}}_Z}\otimes{\mathcal{K}}}
\def\SOtA{{{\mathcal{O}}_{A^t}\otimes{\mathcal{K}}}}
\def\DA{{{\mathcal{D}}_A}}
\def\DB{{{\mathcal{D}}_B}}
\def\DZ{{{\mathcal{D}}_Z}}

\def\SDA{{{\mathcal{D}}_A}\otimes{\mathcal{C}}}
\def\SDB{{{\mathcal{D}}_B}\otimes{\mathcal{C}}}
\def\SDZ{{{\mathcal{D}}_Z}\otimes{\mathcal{C}}}
\def\SDtA{{{\mathcal{D}}_{A^t}\otimes{\mathcal{C}}}}
\def\BC{{{\mathcal{B}}_C}}
\def\BD{{{\mathcal{B}}_D}}
\def\OAG{{\mathcal{O}}_{A^G}}
\def\OBG{{\mathcal{O}}_{B^G}}
\def\O2{{{\mathcal{O}}_2}}
\def\D2{{{\mathcal{D}}_2}}

\newcommand{\mathP}{\mathcal{P}}
\def\OAG{{\mathcal{O}}_{A^G}}
\def\DAG{{\mathcal{D}}_{A^G}}
\def\OBG{{\mathcal{O}}_{B^G}}
\def\CKT{{\mathcal{T}}}
\def\Max{{{\operatorname{Max}}}}
\def\Per{{{\operatorname{Per}}}}
\def\PerB{{{\operatorname{PerB}}}}
\def\Homeo{{{\operatorname{Homeo}}}}
\def\HA{{{\frak H}_A}}
\def\HB{{{\frak H}_B}}
\def\HSA{{H_{\sigma_A}(X_A)}}
\def\Out{{{\operatorname{Out}}}}
\def\Aut{{{\operatorname{Aut}}}}
\def\Ad{{{\operatorname{Ad}}}}
\def\Inn{{{\operatorname{Inn}}}}
\def\Int{{{\operatorname{Int}}}}
\def\det{{{\operatorname{det}}}}
\def\exp{{{\operatorname{exp}}}}
\def\cobdy{{{\operatorname{cobdy}}}}
\def\Ker{{{\operatorname{Ker}}}}
\def\ind{{{\operatorname{ind}}}}
\def\id{{{\operatorname{id}}}}
\def\supp{{{\operatorname{supp}}}}
\def\co{{{\operatorname{co}}}}
\def\Sco{{{\operatorname{Sco}}}}
\def\flip{{{\operatorname{flip}}}}
\def\U{{{\mathcal{U}}}}
\def\Porb{{{\operatorname{P}_{\operatorname{orb}}}}}
\def\ACOE{\underset{\operatorname{ACOE}}{\sim}}
\def\StACOE{\overset{St}{\underset{\operatorname{ACOE}}{\sim}}}
\def\UniACOE{\overset{Uni}{\underset{\operatorname{ACOE}}{\sim}}}
\def\PStACOE{\overset{St}{\underset{\operatorname{+ACOE}}{\sim}}}
\def\NStACOE{\overset{St}{\underset{\operatorname{-ACOE}}{\sim}}}
\def\bs{\bar{\sigma}}
\def\bsA{\bar{\sigma}_A}
\def\bsB{\bar{\sigma}_B}
\def\bXA{\bar{X}_A}
\def\bXB{\bar{X}_B}



\section{ Introduction}
There are many interesting and important researches dealing with  interplay between classification of topological dynamical systems by orbit equivalence 
and classification of the associated $C^*$-algebras.
The first important result in this direction is the work by Giordano--Putnam--Skau
(\cite{GPS1995}, \cite{GMPS}, \cite{HPS}, etc. ).
They obtained a fundamental result on minimal homeomorphisms on Cantor sets,
which  says that two minimal homeomorphisms on Cantor sets are strongly orbit equivalent if and only if 
their associated crossed product $C^*$-algebras are isomorphic (\cite{GPS1995}).
J. Tomiyama and Boyle--Tomiyama  studied a generalization of the GPS's results (\cite{BT}, \cite{ToPacific}, etc. ).
H. Matui and the author in \cite{MMKyoto}
obtained a classification result about 
continuous orbit equivalence of one-sided topological Markov shifts,
that are continuous surjections but not homeomorphisms.
Two-sided topological Markov shifts are typical examples of 
hyperbolic dynamical systems called Smale spaces, that was introduced by D. Ruelle in \cite{Ruelle1},\cite{Ruelle2}.
After Ruelle's work, 
I. Putnam in \cite{Putnam1} initiated to study structure of the $C^*$-algebras 
associated to groupoids constructed 
from Smale spaces (cf. \cite{Putnam2}, \cite{PutSp}, etc.).
Putnam in \cite{Putnam1} first defined three kinds of equivalence relations
called stable, unstable and asymptotic equivalence relations on
a Smale space $(X,\phi)$.
Their associated groupoids are written
$G_\phi^s, G_\phi^u$ and $G_\phi^a$, respectively.
He also studied the groupoids 
$G_\phi^s\rtimes\Z, G_\phi^u\rtimes\Z$ and $G_\phi^a\rtimes\Z$
of their semidirect products by the integer group $\Z$ and 
their associated $C^*$-algebras.
The $C^*$-algebras are written
$\R_\phi^s, \R_\phi^u$ and $\R_\phi^a,$ respectively
and called the stable Ruelle algebra, the unstable Ruelle algebra 
and the asymptotic Ruelle algebra.
The asymptotic groupoid $G_\phi^a\rtimes\Z$ is \'etale, 
whereas the other two  
$G_\phi^s\rtimes\Z, G_\phi^u\rtimes\Z$ are not.
Its $C^*$-algebra $\R_\phi^a$ is a unital nuclear simple $C^*$-algebra with unique tracial state if $(X,\phi)$ is irreducible.    
The unit space $(G_\phi^a\rtimes\Z)^{(0)}$ 
of the groupoid
$G_\phi^a\rtimes\Z$
is naturally identified with the original space $X.$
 The irreducibility condition ensures that the \'etale groupoid 
 $G_\phi^a\rtimes\Z$ is essentially principal so that
the commutative $C^*$-algebra $C(X)$ on the original space $X$ is regarded as a maximal abelian $C^*$-subalgebra of $\R_\phi^a$
 because $C(X)$ is canonically isomorphic to the commutative $C^*$-subalgebra  $C((G_\phi^a\rtimes\Z)^{(0)})$ on the unit space $(G_\phi^a\rtimes\Z)^{(0)}$
of the groupoid $G_\phi^a\rtimes\Z.$
In this paper, we will focus on the \'etale groupoid $G_\phi^a\rtimes\Z$
 $$
G_\phi^a \rtimes\Z = \{(x,n,z) \in X \times \Z \times X \mid (\phi^n(x), z) \in G_\phi^a\}.
$$

In \cite{MaCJM}, 
the author introduced a notion of asymptotic continuous orbit equivalence of Smale spaces.
The notion of asymptotic continuous orbit equivalence is, roughly speaking,
a continuous orbit equivalence in Smale spaces up to asymptotic equivalence.
It comes from an isomorphism between the associated \'etale groupoids. 
Hence by Renault's result \cite{Renault2} (cf. \cite{Renault1}, \cite{Renault3}),  
we know that two Smale spaces 
$(X,\phi)$ and $(Y,\psi)$ are asymptotically continuous orbit equivalent if and only if
the pairs $(\R_\phi^a, C(X))$ and $(\R_\psi^a, C(Y))$ are isomorphic.
In this paper, we will continue to study asymptotic continuous orbit equivalence in Smale spaces.
We will first  reformulate the original definition
of asymptotic continuous orbit equivalence in Smale spaces
given in \cite[Definition 3.2]{MaCJM}
into a slightly simpler form than the original one \cite[Definition 3.2]{MaCJM}
(Proposition \ref{prop:main1}).

Although topologically conjugate Smale spaces induce their asymptotic continuous orbit equivalence,
asymptotic continuous orbit equivalence between Smale spaces does not necessarily
imply topological conjugacy.
Two topological dynamical systems $(X,\phi)$ and $(Y,\psi)$ 
are said to be flip conjugate if 
$(X,\phi)$ is topologically conjugate to $(Y,\psi)$ or its inverse $(Y,\psi^{-1}).$ 
By \cite[Proposition 11.1]{MaCJM}, an irreducible Smale space $(X,\phi)$ 
is asymptotically continuous orbit equivalent to its inverse $(X,\phi^{-1})$.
Hence  
if irreducible Smale spaces $(X, \phi)$ and $(Y, \psi)$
are flip conjugate,
then they are asymptotically continuous orbit equivalent.

As there is an example of irreducible Smale space $(X,\phi)$
that is not topologically conjugate to its inverse
$(X,\phi^{-1})$
(cf. \cite[Example 7.4.19]{LM}),
there is an exact difference between topological conjugacy and flip conjugacy
in Smale spaces.
One of the most interesting problem related to asymptotic continuous orbit equivalence
 is whether or not
there is actually a difference between flip conjugacy and asymptotic continuous orbit equivalence. 
A homeomorphism $h:X\longrightarrow Y$ is said to be 
periodic point preserving if
$h(x)$ is a periodic point in $Y$ for any periodic point $x \in X.$
By studying 
asymptotic periodic points, 
we know that 
if there exists a periodic point preserving homeomorphism
 $h:X\longrightarrow Y$ that gives rise to an asymptotic continuous orbit equivalence between 
irreducible Smale spaces $(X,\phi)$ and $(Y,\psi),$
then they are orbit equivalent with continuous orbit coycle.
Thanks to Boyle-Tomiyama's theorem \cite{BT},
we obtain that $(X,\phi)$ and $(Y,\psi)$ are flip conjugate.
 As a result, we know the following theorem.
\begin{theorem}[{Theorem \ref{thm:flipacoe}}]
Two irreducible Smale spaces  $(X,\phi)$ and $(Y,\psi)$ are flip  conjugate
if and only if there exists a periodic point preserving homeomorphism
 $h:X\longrightarrow Y$ 
that gives rise to an asymptotic continuous orbit equivalence between $(X,\phi)$ and $(Y,\psi).$
\end{theorem}
There is no known examples of Smale spaces that are 
 asymptotically continuous orbit equivalent  but not flip conjugate.

In \cite{MaCJM},
an asymptotic version of  topological conjugacy, called asymptotic conjugacy, 
 was introduced
and characterized in terms of the associated \'etale groupoids $G_\phi^a\rtimes\Z$ 
and its Ruelle algebras $\R_\phi^a$
with the dual action 
$\rho^\phi_t, t \in \T$
induced by $\Z$-crossed product $\R_\phi^a = C^*(G_\phi^a) \rtimes \Z.$ 
The characterizations tell us that 
for two Smale spaces 
$(X,\phi)$ and 
$(Y,\psi)$,   
the following three conditions are equivalent (\cite[Theorem 6.4]{MaCJM}):
\begin{enumerate}
\renewcommand{\theenumi}{\roman{enumi}}
\renewcommand{\labelenumi}{\textup{(\theenumi)}}
\item
$(X,\phi)$ and 
$(Y,\psi)$ are 
asymptotically conjugate.
\item
There exists an isomorphism
$\varphi: G_\phi^a\rtimes\Z\longrightarrow G_\psi^a\rtimes\Z$
 of \'etale groupoids such that 
$d_\psi \circ \varphi = d_\phi,$ 
where
$d_\phi:  G_\phi^a\rtimes\Z\longrightarrow \Z$
is defined by $d_\phi(x,n,z)=n.$
\item
There exists an isomorphism 
$\Phi:\R_\phi^a\longrightarrow\R_\psi^a$
of $C^*$-algebras such that 
$\Phi(C(X) )= C(Y)
$
and
$\Phi \circ \rho^\phi_t =\rho^\psi_{t}\circ \Phi$ for $t \in \T,$
\end{enumerate}
The definition of "asymptotically conjugate" in (i) above  
was given in a complicated fashion in 
\cite[Section 6]{MaCJM}.
We will introduce another notion called {\it asymptotically topologically conjugate}\/
in Smale spaces in the following way.
\begin{definition}
Two Smale spaces $(X,\phi)$ and 
$(Y,\psi)$ 
are said to be {\it asymptotically topologically conjugate}\/
if there exists a homeomorphism $h:X\longrightarrow Y$ satisfying the following two conditions:

\medskip

(A):  $(h(x), h(z)) \in G_\psi^a$ if and only if $(x,z) \in G_\phi^a$,

and the map $\eta_1:(x,z) \in G_\phi^a\longrightarrow (h(x), h(z)) \in G_\psi^a$
is a homeomorphism.

\medskip

(B):  $(\psi(h(x)), h(\phi(x))) \in G_\psi^a$ for all $x \in X$, 

and the map $\xi_1:x \in X \longrightarrow (\psi(h(x)), h(\phi(x))) \in G_\psi^a$ is continuous.
\end{definition} 

We will then prove that 
$(X,\phi)$ and 
$(Y,\psi)$ are asymptotically topologically conjugate if and only if 
they are asymptotically conjugate (Proposition \ref{prop:acatc}), 
so that we obtain the following theorem.

\begin{theorem}[{Theorem \ref{thm:acatc}}]\label{thm:main2}
Let $(X,\phi)$ and 
$(Y,\psi)$ are irreducible Smale spaces. 
The following conditions are equivalent:
\begin{enumerate}
\renewcommand{\theenumi}{\roman{enumi}}
\renewcommand{\labelenumi}{\textup{(\theenumi)}}
\item
$(X,\phi)$ and 
$(Y,\psi)$ are 
asymptotically topologically conjugate.
\item
There exists an isomorphism 
$\Phi:\R_\phi^a\longrightarrow\R_\psi^a$
of $C^*$-algebras such that 
$\Phi(C(X) )= C(Y)
$
and
$\Phi \circ \rho^\phi_t =\rho^\psi_{t}\circ \Phi$ for $t \in \T.$
\end{enumerate}
\end{theorem}

Since the asymptotic continuous orbit equivalence class of a Smale space 
is closed under flip conjugacy, it seems to be natural to introduce a notion 
of asymptotic version of  flip conjugacy. 
 We will then introduce the notion of asymptotic flip conjugacy,
 and  show the following theorem.  
\begin{theorem}[cf. {\cite[Lemma 6.2 and Theorem 6.4]{MaCJM}}]\label{thm:asmpflip}
Let
$(X,\phi)$ and $(Y,\psi)$ be irreducible Smale spaces. 
Then the following assertions are equivalent for $\varepsilon =\pm 1:$
\begin{enumerate}
\renewcommand{\theenumi}{\roman{enumi}}
\renewcommand{\labelenumi}{\textup{(\theenumi)}}
\item
$(X,\phi)$ and 
$(Y,\psi)$ are 
asymptotically  flip conjugate.
\item
There exists an isomorphism
$\varphi: G_\phi^a\rtimes\Z\longrightarrow G_\psi^a\rtimes\Z$
 of \'etale groupoids such that 
$d_\psi \circ \varphi = \varepsilon d_\phi.$
\item
There exists an isomorphism
$\varphi: G_\phi^a\rtimes\Z\longrightarrow G_\psi^a\rtimes\Z$
 of \'etale groupoids such that 
$\varphi(G_\phi^a) = G_\psi^a.$
\item
There exists an isomorphism 
$\Phi:\R_\phi^a\longrightarrow\R_\psi^a$
of $C^*$-algebras such that 
$\Phi(C(X)) = C(Y)
$
and
$\Phi \circ \rho^\phi_t =\rho^\psi_{\varepsilon t}\circ \Phi$ for $t \in \T.$
\item
There exists an isomorphism 
$\Phi:\R_\phi^a\longrightarrow\R_\psi^a$
of $C^*$-algebras such that 
$\Phi(C(X)) = C(Y)
$
and
$\Phi(C^*(G_\phi^a)) = C^*(G_\psi^a).$

%
%
%
\end{enumerate}  
\end{theorem}

If $\dim X =0,$
then the Smale space $(X,\phi)$ must be a shift of finite type,
so called an SFT, and it is topologically conjugate to  
a two-sided topological Markov shift $(\bar{X}_A,\bar{\sigma}_A)$
defined by a square matrix $A$ with entries in $\{0,1\}.$
Let us denote by 
$G_A^a$ and $\R_A^a$ the groupoid
$G_{{\bar\sigma}_A}^a$ and the $C^*$-algebra
$\R_{{\bar\sigma}_A}^a (=C^*(G_{{\bar\sigma}_A}^a\rtimes\Z)),$
 respectively.
The dual action
$\rho^{{\bar\sigma}_A}$ on
$\R_{A}^a$ is denoted by
$\rho^A$.
If we restrict our interest to the class of irreducible topological Markov shifts,
we know that being asymptotically flip conjugate is equivalent to being flip conjugate
(Proposition \ref{prop:flipSFT}), so that we have
\begin{corollary}[{Corollary \ref{cor:flipSFT}}]
Let $A, B$ be irreducible non-permutation matrices with entries in $\{0,1\}.$
Then the following conditions are equivalent: 
\begin{enumerate}
\renewcommand{\theenumi}{\roman{enumi}}
\renewcommand{\labelenumi}{\textup{(\theenumi)}}
\item
The two-sided topological Markov shifts 
$(\bar{X}_A, {\bar\sigma}_A)$ and $ (\bar{X}_B, {\bar\sigma}_B)$
are  flip conjugate.
\item
There exists an isomorphism 
$\Phi:\R_A^a\longrightarrow\R_B^a$
of $C^*$-algebras such that 
$\Phi(C(\bar{X}_A)) = C(\bar{X}_B)
$
and 
$\Phi \circ \rho^A_t =\rho^B_{\varepsilon t}\circ \Phi$ for $t \in \T,$
where 
$\varepsilon = 1$ or $-1.$ 
\item
There exists an isomorphism 
$\Phi:\R_A^a\longrightarrow\R_B^a$
of $C^*$-algebras such that 
$\Phi(C(\bar{X}_A)) = C(\bar{X}_B)
$
and
$\Phi(C^*(G_A^a)) = C^*(G_B^a).$
\end{enumerate}
\end{corollary}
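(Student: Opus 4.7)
The plan is to obtain the corollary as a direct specialization of Theorem \ref{thm:asmpflip}, combined with Proposition \ref{prop:flipSFT}, which identifies asymptotic flip conjugacy with ordinary flip conjugacy in the class of irreducible topological Markov shifts. Since $A$ and $B$ are irreducible non-permutation $\{0,1\}$-matrices, the two-sided shifts $(\bar{X}_A,\bar\sigma_A)$ and $(\bar{X}_B,\bar\sigma_B)$ are irreducible Smale spaces in the sense of Ruelle--Putnam, so Theorem \ref{thm:asmpflip} is available with $(X,\phi) = (\bar{X}_A,\bar\sigma_A)$ and $(Y,\psi) = (\bar{X}_B,\bar\sigma_B)$.

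Under this specialization, condition (ii) of the corollary is literally condition (iv) of Theorem \ref{thm:asmpflip} (for some $\varepsilon = \pm 1$), while condition (iii) of the corollary is literally condition (v) of Theorem \ref{thm:asmpflip}. Hence (ii) and (iii) are each equivalent to $(\bar{X}_A,\bar\sigma_A)$ and $(\bar{X}_B,\bar\sigma_B)$ being asymptotically flip conjugate. To close the loop I would then invoke Proposition \ref{prop:flipSFT}, which states that asymptotic flip conjugacy and flip conjugacy coincide for irreducible two-sided topological Markov shifts, yielding (i) $\Leftrightarrow$ asymptotic flip conjugacy, and therefore (i) $\Leftrightarrow$ (ii) $\Leftrightarrow$ (iii).

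Because the substantive content has been discharged in Theorem \ref{thm:asmpflip} and Proposition \ref{prop:flipSFT}, the only remaining task for the corollary itself is to verify that the standing hypotheses correctly place us in the setting of both results; the non-permutation assumption rules out finite trivial orbits and together with irreducibility of $A,B$ guarantees irreducibility of the Smale spaces. The main obstacle therefore does not lie in the corollary at all but in Proposition \ref{prop:flipSFT}, where one must argue that in the zero-dimensional setting an asymptotic flip conjugacy, \emph{a priori} only a homeomorphism respecting asymptotic equivalence up to the $\pm 1$ cocycle, can be upgraded to an honest topological (flip) conjugacy; this reflects the rigidity of shifts of finite type and is entirely encapsulated in the cited proposition. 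In the writing of the corollary itself I would therefore keep the proof short, essentially a chain of three citations, and devote space only to naming the correspondence between the conditions labeled (ii), (iii) here and (iv), (v) in Theorem \ref{thm:asmpflip}.
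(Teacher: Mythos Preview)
Your proposal is correct and follows essentially the same approach as the paper: the corollary is obtained by combining Theorem \ref{thm:asmpflip} (whose conditions (iv) and (v) are exactly conditions (ii) and (iii) here) with Proposition \ref{prop:flipSFT}, which collapses asymptotic flip conjugacy to flip conjugacy for irreducible topological Markov shifts. The paper presents this as a one-line deduction without a separate proof environment, so your slightly more detailed write-up is fully aligned with the intended argument.
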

Hence the triplet
$(\R_A^a, C^*(G_A^a), C(\bar{X}_A))$
of $C^*$-subalgebras of $\R_A^a$ 
is a complete invariant of the flip conjugacy class of the two-sided topological Markov shift
$(\bar{X}_A, {\bar\sigma}_A)$.  
The asymptotic flip conjugacy is equivalent to the flip conjugacy for irreducible two-sided topological Markov shifts,
however, for general Smale spaces
it is an open question whether or not  
 asymptotic flip conjugacy implies flip conjugacy.

Throughout the paper, we denote by $\N$ and $\Zp$ 
the set of positive integers and the set of nonnegative integers, respectively.

\section{ Smale spaces and their groupoids}
Let $\phi$ be a homeomorphism on a compact metric space $X$ with metric $d$.
We will briefly recall the definition of Smale space $(X, \phi)$ 
given in \cite[Section 7]{Ruelle1} and \cite{Putnam1}.
The following notations are slightly different from Putnam's ones
\cite{Putnam1}.
 For $\epsilon>0$, 
we put
\begin{equation*}
\Delta_{\epsilon} := \{(x,y) \in X\times X\mid d(x,y) <\epsilon \}.
\end{equation*}
Since 
$(X, \phi)$ 
is a Smale space,
there exists $\epsilon_X>0$ and a continuous map
\begin{equation*}
[\cdot, \cdot]: (x,y) \in \Delta_{\epsilon_X} \longrightarrow [x,y]\in X  
\end{equation*}
which makes $(X, \phi)$ a Smale space
(see \cite{Putnam1}, \cite{Ruelle1}).
Put for $0<\epsilon \le \epsilon_X$
\begin{align}
X^s(x,\epsilon) & = \{ y \in X \mid [y,x] = y, \, d(x,y) <\epsilon \}, \\
X^u(x,\epsilon) & = \{ y \in X \mid [x,y] = y, \, d(x,y) <\epsilon \}.
\end{align}
A Smale space has a hyperbolic structure such as 
 there exists  $0<\lambda_X<1$ such that 
\begin{align}
d(\phi(y), \phi(z)) \le \lambda_X d(y,z) 
\qquad \text{ for } y,z \in X^s(x,\epsilon_X), \label{eq:lambda1}\\
d(\phi^{-1}(y), \phi^{-1}(z)) \le \lambda_X d(y,z) 
\qquad \text{ for } y,z \in X^u(x,\epsilon_X). \label{eq:lambda2}
\end{align}
We call the positive real numbers $\epsilon_X, \lambda_X$
the Smale space constants. 
By Ruelle \cite{Ruelle1} and Putnam \cite{Putnam1},  
there exists $\epsilon_1$ with $0<\epsilon_1<\epsilon_X
$
such that for any $\epsilon$ satisfying
$0<\epsilon <\epsilon_1$, the equalities  
\begin{align}
X^s(x,\epsilon) & = \{ y \in X \mid d(\phi^n(x), \phi^n(y))< \epsilon \text{ for all }
n=0,1,2,\dots \}, \\
X^u(x,\epsilon) & = \{ y \in X \mid d(\phi^n(x), \phi^n(y))< \epsilon \text{ for all }
n=0,-1,-2,\dots \}
\end{align}
hold.
Following Putnam \cite{Putnam1}, we set
\begin{align*}
G_{\phi}^{s,0} &= \{ (x,y) \in X \times X \mid y \in X^s(x,\epsilon_X) \}, \\
G_{\phi}^{u,0} &= \{ (x,y) \in X \times X \mid y \in X^u(x,\epsilon_X) \}, \\
G_{\phi}^{a,0} &= G_{\phi}^{s,0} \cap G_{\phi}^{u,0},
\end{align*}
and for $n\in \Z$, 
\begin{equation*}
G_{\phi}^{s,n} = (\phi\times\phi)^{-n}(G_{\phi}^{s,0}), \quad
G_{\phi}^{u,n} = (\phi\times\phi)^{n}(G_{\phi}^{u,0}), \quad
G_{\phi}^{a,n} = G_{\phi}^{s,n} \cap G_{\phi}^{u,n}.
\end{equation*}
All of them are endowed with the relative topology of $X\times X$.

Since
$\phi(X^s(x,\epsilon)) \subset X^s(\phi(x),\epsilon), $
and
$\phi^{-1}(X^u(x,\epsilon)) \subset X^u(\phi^{-1}(x),\epsilon) $
for any $\epsilon$ with 
$0 < \epsilon \le \epsilon_X$,
 we know that 
\begin{equation}
G_\phi^{*, n} \subset G_\phi^{*, n+1}, \qquad * =s,u,a, \,\, n=0,1,\dots
\label{eq:Gphin}
\end{equation}
  Following \cite{Putnam1}, \cite{Putnam2}, \cite{PutSp},
we define three equivalence relations on $X$:
\begin{equation*}
G_{\phi}^{s} = \cup_{n=0}^\infty G_{\phi}^{s,n},\qquad
G_{\phi}^{u} = \cup_{n=0}^\infty G_{\phi}^{u,n},\qquad
G_{\phi}^{a} = \cup_{n=0}^\infty G_{\phi}^{a,n}.
\end{equation*}
By \eqref{eq:Gphin},
the set $G_{\phi}^{*} = \cup_{n=0}^\infty G_{\phi}^{*,n}$
is an inductive system of topological spaces.
Each $G_\phi^*, * =s,u,a$ is  endowed with the inductive limit topology.
The following lemma has been shown by Putnam. 
\begin{lemma}[Putnam {\cite{Putnam1}}]
\hspace{5cm}
\begin{enumerate}
\renewcommand{\theenumi}{\roman{enumi}}
\renewcommand{\labelenumi}{\textup{(\theenumi)}}
\item
$G_{\phi}^{s} = \{ (x,y) \in X\times X
\mid 
\lim\limits_{n\to\infty}d(\phi^n(x),\phi^n(y)) =0 \}.$
\item
$G_{\phi}^{u} = \{ (x,y) \in X\times X
\mid 
\lim\limits_{n\to\infty}d(\phi^{-n}(x),\phi^{-n}(y)) =0 \}.$
\item
$G_{\phi}^{a} = \{ (x,y) \in X\times X
\mid 
  \lim\limits_{n\to\infty}d(\phi^n(x),\phi^n(y)) 
=\lim\limits_{n\to\infty}d(\phi^{-n}(x),\phi^{-n}(y)) =0 \}.$
\end{enumerate}
\end{lemma}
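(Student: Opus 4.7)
My plan is to prove (i) directly using the hyperbolic contraction estimate \eqref{eq:lambda1} together with the alternative characterization of $X^s(x,\epsilon)$ for $\epsilon<\epsilon_1$ stated in the paragraph preceding the lemma, then deduce (ii) by the symmetry $\phi \leftrightarrow \phi^{-1}$, and finally obtain (iii) by intersecting.

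For the forward inclusion of (i), I would start with $(x,y)\in G_\phi^s$, pick $n$ with $(x,y)\in G_\phi^{s,n}$, so that $\phi^n(y)\in X^s(\phi^n(x),\epsilon_X)$. Using the invariance $\phi(X^s(w,\epsilon_X))\subset X^s(\phi(w),\epsilon_X)$ noted just before \eqref{eq:Gphin}, a straightforward induction keeps $\phi^{n+k}(y)$ inside $X^s(\phi^{n+k}(x),\epsilon_X)$ for every $k\ge 0$. Applying \eqref{eq:lambda1} iteratively with $z=\phi^{n+k}(x)$ and $y=\phi^{n+k}(y)$ yields
\begin{equation*}
d(\phi^{n+k}(x),\phi^{n+k}(y)) \le \lambda_X^k\, d(\phi^n(x),\phi^n(y)) \le \lambda_X^k\, \epsilon_X,
\end{equation*}
which tends to $0$ since $0<\lambda_X<1$.

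For the reverse inclusion, assume $\lim_{k\to\infty} d(\phi^k(x),\phi^k(y))=0$. Fix any $\epsilon$ with $0<\epsilon<\epsilon_1$. By hypothesis, there exists $N\in\N$ such that $d(\phi^{N+k}(x),\phi^{N+k}(y))<\epsilon$ for every $k\ge 0$. The characterization recalled just above the lemma then gives $\phi^N(y)\in X^s(\phi^N(x),\epsilon)\subseteq X^s(\phi^N(x),\epsilon_X)$, so $(x,y)\in G_\phi^{s,N}\subset G_\phi^s$. This finishes (i), and (ii) is obtained by replacing $\phi$ with $\phi^{-1}$ and using \eqref{eq:lambda2} in the same way.

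For (iii), it suffices to observe that $G_\phi^a = G_\phi^s\cap G_\phi^u$: one inclusion is immediate from $G_\phi^{a,n}=G_\phi^{s,n}\cap G_\phi^{u,n}$, and the other uses the nestedness \eqref{eq:Gphin} — if $(x,y)\in G_\phi^{s,n}\cap G_\phi^{u,m}$, set $k=\max(n,m)$ so $(x,y)\in G_\phi^{s,k}\cap G_\phi^{u,k}=G_\phi^{a,k}$. Combined with (i) and (ii), this gives the claimed double-limit description. The only potentially delicate point is making sure that one has the freedom to shrink $\epsilon$ below $\epsilon_1$ in the reverse direction so as to apply the local-stable-set characterization; this is handled simply by choosing $\epsilon$ first and then taking $N$ large, rather than trying to work directly at scale $\epsilon_X$.
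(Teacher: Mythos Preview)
Your argument is correct and is essentially the standard proof of this fact. Note, however, that the paper does not supply its own proof of this lemma: it is stated with attribution to Putnam \cite{Putnam1} and no argument is given in the text. So there is nothing to compare against beyond saying that your write-up is a clean self-contained version of the classical argument (contraction for the forward inclusion, the $\epsilon<\epsilon_1$ characterization of the local stable set for the reverse inclusion, and the observation $G_\phi^a=G_\phi^s\cap G_\phi^u$ via the nestedness \eqref{eq:Gphin}).
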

Putnam has studied these three equivalence relations  
$G_{\phi}^{s}, \, G_{\phi}^{u} $ and $G_{\phi}^{a}$
on $X$ by regarding them as principal groupoids.
In this paper, we will focus on the third equivalence relation 
$G_\phi^a$ on $X$ and its semi-direct product $G_\phi^a\rtimes\Z$ by $\Z$
formulated in \cite{Putnam1} and \cite{PutSp}:
\begin{equation*}
G_{\phi}^{a} \rtimes \Z= \{(x,n,y) \in X \times \Z \times X \mid
(\phi^n(x), y) \in G_{\phi}^{a} \}.
\end{equation*}
Since the map
\begin{equation}
\gamma: 
(x,n,y)\in G_{\phi}^{a} \rtimes \Z \rightarrow 
((x,\phi^{-n}(y)),n)\in G_{\phi}^{a} \times \Z
\label{eq:gamma}
\end{equation}
is bijective, the topology of the groupoid
$G_{\phi}^{a} \rtimes \Z$ is defined by the product topology of 
$G_{\phi}^{a} \times \Z$ through the map $\gamma$.
Let us denote by
$(G_{\phi}^{a} \rtimes \Z)^{(0)}$
the unit space 
\begin{equation*}
(G_{\phi}^{a} \rtimes \Z)^{(0)}
= \{ (x,0,x) \in X \times \Z \times\Z \mid (x,x) \in G_{\phi}^{a}\}
\end{equation*} 
of the groupoid
$G_{\phi}^{a} \rtimes \Z$
which is identified with
that of 
$G_{\phi}^{a}$
and naturally homeomorphic to
the original space $X$
through the correspondence
$(x,0,x) \in (G_{\phi}^{a} \rtimes \Z)^{(0)} \longrightarrow x \in X.$
The range map
$r:G_{\phi}^{a} \rtimes \Z\rightarrow (G_{\phi}^{a} \rtimes \Z)^{(0)}$
and the source map
$s:G_{\phi}^{a} \rtimes \Z\rightarrow (G_{\phi}^{a} \rtimes \Z)^{(0)}$
are defined by 
\begin{equation*}
r(x,n,y) = (x,0,x)
\quad
\text{ and }
\quad
s(x,n,y) = (y,0,y).
\end{equation*}
The groupoid operations are defined by
\begin{gather*}
(x, n, y)\cdot (x',m,w) =(x, n+m, w)\quad \text{ if } y = x',\\
(x, n,y)^{-1} = (y,-n,x).
\end{gather*}

Putnam \cite{Putnam1}, \cite{Putnam2} 
and
Putnam--Spielberg \cite{PutSp}
have  also  studied the groupoid $C^*$-algebras
$C^*(G_\phi^a)$
and
$C^*(G_{\phi}^{a}\rtimes \Z).$
The latter $C^*$-algebra is called the (asymptotic) Ruelle algebra
written $\R_a.$
In this paper we denote it 
by  $\R_\phi^a$
to emphasize the homeomorphism
$\phi$.


\section{Asymptotic continuous orbit equivalence} 
Let $(X,\phi)$ be a Smale space.
In this section, the symbol $d$ will be used as a two-cocycle function
unless we specify, and it does not mean the metric on $X$. 
A one-cocycle function on  $(X,\phi)$ 
means a sequence $\{ f_n \}_{n \in \Z}$ of integer-valued continuous functions on $X$ 
satisfying the identity   
\begin{equation}
f_n(x) + f_m(\phi^n(x)) = f_{n+m}(x),\qquad x \in X, \, n,m \in \Z. \label{eq:onecocycle}
\end{equation}
For a continuous function 
$f:X\longrightarrow \Z$ and $n \in \Z$,
we define a continuous function $f^n$ by setting
\begin{equation}
f^n(x) =
\begin{cases}
\sum_{i=0}^{n-1} f(\phi^i(x)) & \text{ for } n>0, \\
0 & \text{ for } n=0,\\
- \sum_{i=n}^{-1}f(\phi^{i}(x)) & \text{ for } n<0.
\end{cases}    \label{eq:fn}                       
\end{equation}
It is direct to see that the sequence
$\{f^n\}_{n \in \Z}$ 
is a one-cocycle for $\phi$.
Conversely,  one-cocycle function $\{ f_n \}_{n \in \Z}$
is determined by only $f_1$.
Let $G_\phi^a$ be the \'etale groupoid of asymptotic equivalence relation on $X$.
A two-cocycle function on  $(X,\phi)$ 
means a continuous function $d:G_\phi^a \longrightarrow \Z$
satisfying the identity   
\begin{equation}
d(x,z) + d(z,w) = d(x,w), \qquad (x,z), (z,w) \in G_\phi^a. \label{eq:2cocycle}
\end{equation}
The identity \eqref{eq:2cocycle} means that $d: G_\phi^a \longrightarrow \Z$
is a groupoid homomorphism.
In \cite{MaCJM}, the author introduced a notion of 
asymptotic continuous orbit equivalence in Smale spaces in the following way.
\begin{definition}[{\cite[Definition 2.1]{MaCJM}}]\label{def:acoe}
Two Smale spaces
$(X,\phi)$ and $(Y, \psi)$ are said to be
{\it asymptotically continuous orbit equivalent,}\/
written 
$(X, \phi) \underset{ACOE}{\sim}(Y, \psi),$
if there exist a homeomorphism
$h: X\longrightarrow Y$,
continuous functions
$
c_1:X\longrightarrow \Z, \, c_2:Y\longrightarrow \Z,
$
and two-cocycle functions
$
d_1: G_\phi^a \longrightarrow \Z,\, d_2: G_\psi^a \longrightarrow \Z
$
such that 
\begin{enumerate}
\renewcommand{\theenumi}{\arabic{enumi}}
\renewcommand{\labelenumi}{\textup{(\theenumi)}}
\item
$c_1^m(x) + d_1(\phi^m(x), \phi^m(z))
=c_1^m(z) + d_1(x, z), \quad
(x,z) \in G_\phi^a, \, m \in \Z.$
\item
$c_2^m(y) + d_2(\psi^m(y), \psi^m(w))
=c_2^m(w) + d_2(y, w),\quad
(y, w) \in G_\psi^a, \, m \in \Z.$
\end{enumerate}
and,
\begin{enumerate}
\renewcommand{\theenumi}{\roman{enumi}}
\renewcommand{\labelenumi}{\textup{(\theenumi)}}
\item
For each $n \in \Z$, the pair
$( \psi^{c_1^n(x)}(h(x)),  h(\phi^n(x))) =:\xi_1^n(x)$
 belongs to $G_\psi^a$ for each $x \in X$,
and
the map
$\xi_1^n: x \in X \longrightarrow \xi_1^n(x) \in G_\psi^a$ is continuous. 
\item
For each $n \in \Z$, the pair
$( \phi^{c_2^n(y)}(h^{-1}(y)),  h^{-1}(\psi^n(y))) =:\xi_2^n(y)$
 belongs to $G_\phi^a$ for each $y \in Y$,
and
the map
$\xi_2^n: y \in Y \longrightarrow \xi_2^n(y) \in G_\phi^a$ is continuous. 
\item
The pair 
$(\psi^{d_1(x,z)}(h(x)), h(z)) =:\eta_1(x,z)$ belongs to $G_\psi^a$
for each $(x,z) \in G_\phi^a$, and the map
$\eta_1:(x,z) \in G_\phi^a\longrightarrow \eta_1(x,z)\in G_\psi^a$ is continuous.
\item
The pair 
$(\phi^{d_2(y,w)}(h^{-1}(y)), h^{-1}(w)) =:\eta_2(y,w)$ belongs to $G_\phi^a$
for each $(y,w) \in G_\psi^a$, and the map
$\eta_2:(y,w) \in G_\psi^a\longrightarrow \eta_2(y,w)\in G_\phi^a$ is continuous.
\item
$c^{c^n_1(x)}_2(h(x)) + d_2(\psi^{c_1^n(x)}(h(x)), h(\phi^n(x))) = n, \quad x \in X,\, n \in \Z.$
\item
$c^{c^n_2(y)}_1(h^{-1}(y)) + d_1(\phi^{c_2^n(y)}(h^{-1}(y)), h^{-1}(\psi^n(y))) = n, 
\qquad y \in Y,\, n \in \Z.$
\item
$c^{d_1(x,z)}_2(h(x)) + d_2(\psi^{d_1(x,z)}(h(x)), h(z)) = 0, \quad (x,z) \in G_\phi^{a}.$
\item
$c^{d_2(y,w)}_1(h^{-1}(y)) + d_1(\phi^{d_2(y,w)}(h^{-1}(y)), h^{-1}(w)) = 0, 
\qquad (y,w) \in G_\psi^{a}.$
\end{enumerate}
\end{definition}
The listed conditions above of the definition of asymptotic continuous orbit equivalence come from the conditions such that there exists an isomorphism
between $G_\phi^a\rtimes \Z$ and $G_\psi^a\rtimes\Z$ as \'etale groupoids in the following way.
A Smale space $(X,\phi)$ is said to be irreducible
if for every ordered pair of open sets $U, V \subset X$,
there exists $K \in \N$ such that $\phi^K(U) \cap V \ne \emptyset$.  
\begin{proposition}[{\cite[Theorem 3.4]{MaCJM}}]\label{prop:3.2}
Let $(X,\phi)$ and $(Y,\psi)$ be irreducible Smale spaces.  
Then 
$(X, \phi)$ and $(Y, \psi)$ are asymptotically continuous orbit equivalent
if and only if the groupoids 
$G_{\phi}^{a} \rtimes \Z$ and $G_{\psi}^{a} \rtimes \Z$
are isomorphic as \'etale groupoids.
\end{proposition}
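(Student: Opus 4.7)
The plan is to exchange the ACOE data $(h,c_1,c_2,d_1,d_2)$ for an \'etale groupoid isomorphism $\varphi\colon G_\phi^a\rtimes\Z\longrightarrow G_\psi^a\rtimes\Z$, in both directions, with a single continuous $\Z$-valued cocycle on $G_\phi^a\rtimes\Z$ as the connecting device.

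For the forward implication, assuming ACOE holds with data $(h,c_1,c_2,d_1,d_2)$, I would define
\[
\varphi(x,n,z) := \bigl(h(x),\, c_1^n(x) + d_1(\phi^n(x),z),\, h(z)\bigr).
\]
To check that the image lands in $G_\psi^a\rtimes\Z$, I would combine the asymptotic pair $\xi_1^n(x)\in G_\psi^a$ from (i), translated by $\psi^{d_1(\phi^n(x),z)}$, with the asymptotic pair $\eta_1(\phi^n(x),z)\in G_\psi^a$ from (iii); transitivity of the equivalence relation $G_\psi^a$, together with its $\psi\times\psi$-invariance, yields $(\psi^{c_1^n(x)+d_1(\phi^n(x),z)}(h(x)), h(z)) \in G_\psi^a$. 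Continuity of $\varphi$ is inherited from its ingredients. The multiplicativity $\varphi((x,n,y)(y,m,w)) = \varphi(x,n,y)\varphi(y,m,w)$ unfolds into the one-cocycle identity \eqref{eq:onecocycle} for $c_1^{n+m}$, the shift relation (1) used to rewrite $d_1(\phi^{n+m}(x),\phi^m(y))$, and the two-cocycle identity \eqref{eq:2cocycle} for $d_1$. A parallel formula built from $(h^{-1},c_2,d_2)$ defines a candidate inverse, and conditions (v)--(viii) collapse precisely to the identities required for the two maps to compose to the identity on each side.

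For the reverse implication, given $\varphi$, I would first observe that any groupoid homomorphism preserves units (idempotents), so $\varphi$ restricts to a homeomorphism $h:=\varphi|_X\colon X\longrightarrow Y$ between unit spaces; the range and source constraints then force $\varphi(x,n,z) = (h(x),\, \tilde c(x,n,z),\, h(z))$ for a unique continuous groupoid homomorphism $\tilde c\colon G_\phi^a\rtimes\Z\longrightarrow\Z$. I would extract the ACOE data by restricting $\tilde c$:
\[
c_1(x) := \tilde c(x,1,\phi(x)), \qquad d_1(x,z) := \tilde c(x,0,z) \ \text{ for } (x,z)\in G_\phi^a,
\]
so that \eqref{eq:fn} gives $c_1^n(x) = \tilde c(x,n,\phi^n(x))$, with $c_2,d_2$ defined symmetrically from $\varphi^{-1}$. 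Each identity in Definition \ref{def:acoe} corresponds to applying $\tilde c$ (or its $\varphi^{-1}$-counterpart) to a factorisation $(x,n,z) = (x,n,\phi^n(x))\cdot(\phi^n(x),0,z)$, or to the compositions $\varphi^{-1}\circ\varphi = \id$ and $\varphi\circ\varphi^{-1}=\id$ evaluated on appropriate elements. The continuous maps $\xi_i^n$ and $\eta_i$ in (i)--(iv) are then obtained as coordinate projections of $\varphi$ and $\varphi^{-1}$ applied to the relevant subsets of $G_\phi^a\rtimes\Z$ and $G_\psi^a\rtimes\Z$.

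The main obstacle is the sheer bookkeeping: each of the numbered and roman-numeral clauses of Definition \ref{def:acoe} must be matched to a groupoid-theoretic identity, and it is easy to lose track of the shifts and signs. The conceptual device that keeps the argument clean is isolating the single cocycle $\tilde c$, so that every identity reduces either to $\tilde c$ being a groupoid homomorphism or to mutual inversion between $\tilde c$ and its $\varphi^{-1}$-counterpart. Irreducibility of $(X,\phi)$ and $(Y,\psi)$ is used to ensure that the asymptotic groupoids are essentially principal, so that the unit space is intrinsically distinguished inside $G_\phi^a\rtimes\Z$ and the canonical restriction $h := \varphi|_X$ is unambiguously defined.
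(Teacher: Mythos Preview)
Your approach coincides with the one the paper describes: the map $\varphi_h(x,n,z)=(h(x),\,c_1^n(x)+d_1(\phi^n(x),z),\,h(z))$ is precisely the isomorphism used in the cited proof of \cite[Theorem 3.4]{MaCJM}, and Lemma~\ref{lem:five} of the present paper confirms that multiplicativity of $\varphi_h$ is equivalent to condition~(1) of Definition~\ref{def:acoe}, exactly as you claim. One small correction to your closing remark: the unit space of any groupoid is already intrinsically distinguished as the set of idempotents, so irreducibility is not what makes the restriction $h=\varphi|_X$ well defined; essential principality is invoked elsewhere in the paper for the $C^*$-algebraic reconstruction via Renault's theorem, not for this purely groupoid-theoretic equivalence.
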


In this section, we will reformulate the conditions of Definition \ref{def:acoe}
above in a slightly useful form.
For a homeomorphism $h:X\longrightarrow Y$,
a continuous function $c_1: X\longrightarrow \Z$ and 
a two-cocycle function $d_1: G_\phi^a\longrightarrow\Z,$ 
we define 
\begin{equation*}
\varphi_h(x,n,z) =(h(x),  c_1^n(x) + d_1(\phi^n(x),z), h(z)), 
\qquad (x,n,z) \in G_\phi^a\rtimes\Z. 
\end{equation*}
The map $\varphi_h: G_\phi^a\rtimes\Z\longrightarrow Y\times \Z\times Y$
was used in the proof of \cite[Theorem 3.4]{MaCJM} to define an isomorphism
from $G_\phi^a\rtimes\Z$ to $G_\psi^a\rtimes\Z.$
The following lemma was used in the proof of \cite[Theorem 3.4]{MaCJM} 
without detail proof.
The third condition (iii) below is nothing but the first condition (1) in Definition \ref{def:acoe}.
We will give a detail proof of the lemma here.
\begin{lemma}\label{lem:five}
The following five conditions are mutually equivalent:
\begin{enumerate}
\renewcommand{\theenumi}{\roman{enumi}}
\renewcommand{\labelenumi}{\textup{(\theenumi)}}
\item
$\varphi_h((x,n,x')(x',m,z))
=\varphi_h(x,n,x')\varphi_h(x',m,z)
$ 
for $(x,n,x'), (x',m,z) \in G_\phi^a\rtimes\Z.$ 
\item
$c_1^m(\phi^n(x)) + d_1(\phi^{m+n}(x), \phi^m(z))
=c_1^m(z) + d_1(\phi^n(x), z)
$
for
$
(\phi^n(x),z) \in G_\phi^a, \, m \in \Z.$
\item
$c_1^m(x) + d_1(\phi^m(x), \phi^m(z))
=c_1^m(z) + d_1(x, z)
$
for
$
(x,z) \in G_\phi^a, \, m \in \Z.$
\item
$
c_1(x) + d_1(\phi(x), \phi(z))
=c_1(z) + d_1(x, z)
$
for
$
(x,z) \in G_\phi^a.
$
\item
$
c_1^{-1}(x) + d_1(\phi^{-1}(x), \phi^{-1}(z))
=c_1^{-1}(z) + d_1(x, z)
$
for
$
(x,z) \in G_\phi^a.
$
\end{enumerate}
\end{lemma}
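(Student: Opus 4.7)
The plan is to route everything through the two central equivalences \mbox{(i) $\Leftrightarrow$ (iii)} and \mbox{(iv) $\Leftrightarrow$ (iii)}, with the other conditions being either trivial rewrites or easy specializations. First I would observe that (ii) and (iii) are the same statement: (ii) is obtained from (iii) by replacing the variable $x$ by $\phi^n(x)$, and conversely (iii) follows from (ii) by setting $n=0$. So those can be treated as a single condition.

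For \mbox{(i) $\Leftrightarrow$ (iii)}, I would expand both sides of the claimed groupoid homomorphism identity. Using $(x,n,x')\cdot(x',m,z)=(x,n+m,z)$ in $G_\phi^a\rtimes\Z$, the left side of (i) is
\[
\varphi_h(x,n+m,z)=\bigl(h(x),\, c_1^{n+m}(x)+d_1(\phi^{n+m}(x),z),\, h(z)\bigr),
\]
while the right side is
\[
\bigl(h(x),\, c_1^n(x)+d_1(\phi^n(x),x')+c_1^m(x')+d_1(\phi^m(x'),z),\, h(z)\bigr).
\]
So (i) amounts to equality of the middle integers. Applying the one-cocycle identity $c_1^{n+m}(x)=c_1^n(x)+c_1^m(\phi^n(x))$ from \eqref{eq:fn} and the two-cocycle identity \eqref{eq:2cocycle} in the form $d_1(\phi^{n+m}(x),z)=d_1(\phi^{n+m}(x),\phi^m(x'))+d_1(\phi^m(x'),z)$ (which is legal because $(\phi^n(x),x')\in G_\phi^a$ is $\phi$-invariant), the equation collapses exactly to
\[
c_1^m(\phi^n(x))+d_1(\phi^m(\phi^n(x)),\phi^m(x'))=c_1^m(x')+d_1(\phi^n(x),x'),
\]
which is (iii) applied at the pair $(\phi^n(x),x')\in G_\phi^a$. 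Thus (i), (ii) and (iii) are mutually equivalent.

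Clearly (iii) implies both (iv) (take $m=1$) and (v) (take $m=-1$). For the converse chain, I would first show \mbox{(iv) $\Leftrightarrow$ (v)}: applying (iv) to $(\phi^{-1}(x),\phi^{-1}(z))\in G_\phi^a$ gives
\[
c_1(\phi^{-1}(x))+d_1(x,z)=c_1(\phi^{-1}(z))+d_1(\phi^{-1}(x),\phi^{-1}(z)),
\]
and since $c_1^{-1}(x)=-c_1(\phi^{-1}(x))$ by \eqref{eq:fn}, rearranging yields (v); the reverse is symmetric. Finally I would upgrade (iv) to (iii) by induction on $m\geq0$, the base $m=0$ being $c_1^0\equiv 0$. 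Assuming (iii) for $m$, one splits $c_1^{m+1}(x)=c_1^m(x)+c_1(\phi^m(x))$ and uses (iv) applied to $(\phi^m(x),\phi^m(z))\in G_\phi^a$ to rewrite $c_1(\phi^m(x))+d_1(\phi^{m+1}(x),\phi^{m+1}(z))$ as $c_1(\phi^m(z))+d_1(\phi^m(x),\phi^m(z))$, then apply (iii) at level $m$ and reassemble to get (iii) at level $m+1$. The same induction starting from (v) covers $m\leq0$.

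There is no real obstacle here; the computation is purely a cocycle bookkeeping exercise. The only point that requires attention is verifying that every instance of the two-cocycle identity \eqref{eq:2cocycle} is applied to triples of asymptotically equivalent points, which holds because $G_\phi^a$ is invariant under $\phi\times\phi$. Once that is checked, (i), (ii), (iii), (iv) and (v) are linked by the chain \mbox{(i) $\Leftrightarrow$ (ii) $\Leftrightarrow$ (iii) $\Rightarrow$ (iv) $\Leftrightarrow$ (v) $\Rightarrow$ (iii)}.
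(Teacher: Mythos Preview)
Your proposal is correct and follows essentially the same approach as the paper: expand the groupoid product to reduce (i) to the cocycle identity (ii)/(iii), specialize to get (iv) and (v), and recover (iii) from (iv) by iterating along the orbit (the paper phrases this as a telescoping sum over $n=0,\dots,m-1$ rather than a formal induction, but the content is identical). The only cosmetic difference is that the paper shows (v) $\Rightarrow$ (iv) directly and then (iv) $\Rightarrow$ (iii) for all $m\in\Z$, whereas you establish (iv) $\Leftrightarrow$ (v) and use each to handle one sign of $m$; both routes are equally valid.
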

\begin{proof}
(i) $\Longleftrightarrow$ (ii):
For $(x,n,x'), (x',m,z) \in G_\phi^a\rtimes\Z,$
we have
\begin{align*}
& \varphi_h((x,n,x')(x',m,z)) \\
=& \varphi_h(x,n +m,z) \\
=&(h(x), c_1^{n+m}(x) + d_1(\phi^{n+m}(x), z), h(z))\\
=&(h(x), c_1^{n}(x) + c_1^m(\phi^n(x)) + d_1(\phi^{n+m}(x), \phi^m(x'))+ d_1(\phi^{m}(x'), z), h(z))\\
\intertext{and}
&\varphi_h(x,n,x') \varphi_h(x',m,z) \\
=& (h(x), c_1^{n}(x) + d_1(\phi^{n}(x), x'), h(x'))
     (h(x'), c_1^{m}(x') + d_1(\phi^{m}(x'), z), h(z)) \\
=& (h(x), c_1^{n}(x) +c_1^{m}(x') + d_1(\phi^{n}(x), x') +d_1(\phi^{m}(x'), z),  h(z)).
\end{align*}
Hence we see that 
$ \varphi_h((x,n,x')(x',m,z)) =\varphi_h(x,n,x') \varphi_h(x',m,z)$ 
if and only if
\begin{equation*}
c_1^m(\phi^n(x)) + d_1(\phi^{n+m}(x), \phi^m(x'))
=c_1^{m}(x') + d_1(\phi^{n}(x), x')  \text{ for  }(\phi^n(x), x') \in G_\phi^a.
\end{equation*}

(ii) $\Longrightarrow$ (iii): Put $n=0$ in (ii), then we have (iii).

(iii) $\Longrightarrow$ (ii): Take $\phi^n(x)$ as $x$ in (iii), then we have (ii).

(iii) $\Longrightarrow$ (iv): Put $m=1$ in (iii),  then we have (iv).

(iii) $\Longrightarrow$ (v): Put $m=-1$ in (iii),  then we have (v).

(iv) $\Longrightarrow$ (iii): 
Assume (iv).
For $(x,z) \in G_\phi^a$, we know that $(\phi^n(x), \phi^n(z)) \in G_\phi^a$
for all $n \in \Z.$
For $m\in \Z$ with $m>0$,
take $(\phi^n(x), \phi^n(z))$ for $n=0,1,\dots,m-1$ as $(x,z)$ in  (iv).
Then we have 
$$
c_1(\phi^n(x)) + d_1(\phi^{n+1}(x), \phi^{n+1}(z))
=c_1(\phi^n(z)) + d_1(\phi^n(x), \phi^n(z)), \qquad n=0,1,\dots,m-1
$$
so that 
$$
\sum_{n=0}^{m-1} \{ c_1(\phi^n(x)) + d_1(\phi^{n+1}(x), \phi^{n+1}(z)) \}
=\sum_{n=0}^{m-1} \{c_1(\phi^n(z)) + d_1(\phi^n(x), \phi^n(z)) \}.
$$
Hence we get
$$
c_1^m(x) + d_1(\phi^m(x), \phi^m(z))
=c_1^m(z) + d_1(x, z)
\quad \text{ for }
(x,z) \in G_\phi^a, \, m \in \Z, m>0.
$$
Take $(\phi^n(x), \phi^n(z))$ for $n=-1,-2,\dots, -m$ as $(x,z)$
in (iv).
Then we have 
$$
c_1(\phi^n(x)) + d_1(\phi^{n+1}(x), \phi^{n+1}(z))
=c_1(\phi^n(z)) + d_1(\phi^n(x), \phi^n(z)), \qquad n=-1,-2,\dots,-m
$$
so that 
$$
\sum_{n=-m}^{-1} \{ c_1(\phi^n(x)) + d_1(\phi^{n+1}(x), \phi^{n+1}(z)) \}
=\sum_{n=-m}^{-1} \{c_1(\phi^n(z)) + d_1(\phi^n(x), \phi^n(z)) \}.
$$
Hence we get
$$
-c_1^{-m}(x) + d_1(x, z)
=-c_1^{-m}(z) + d_1(\phi^{-m}(x), \phi^{-m}(z))
\quad \text{ for }
(x,z) \in G_\phi^a, \, m \in \Z, m>0.
$$
Therefore (iii) holds for all $m \in \Z.$

(v) $\Longrightarrow$ (iv): Assume (v).
For $(x,z) \in G_\phi^a$, 
take  $(\phi(x), \phi(z)) \in G_\phi^a$
as  $(x,z)$ in (v).
We then have
$$
-c_1(x) + d_1(x, z)
=-c_1(z) + d_1(\phi(x), \phi(z))
$$
so that we obtain (iv).
\end{proof}
By Lemma \ref{lem:five}, the identity (1) in Definition \ref{def:acoe}
is replaced with 
\begin{equation*}
c_1(x) + d_1(\phi(x), \phi(z))
=c_1(z) + d_1(x, z)
\quad \text{ for }
(x,z) \in G_\phi^a,
\end{equation*}
and similarly 
the identity (2) in Definition \ref{def:acoe}
is replaced with 
\begin{equation*}
c_2(y) + d_2(\psi(y), \psi(w))
=c_2(w) + d_2(y, w)
\quad \text{ for }
(y,w) \in G_\psi^a.
\end{equation*}
Let us next reformulate the conditions (i) and (ii) in Definition \ref{def:acoe}.
We provide lemmas.
\begin{lemma}\label{lem:3.4-1}
Let $\lambda_X$ be the positive constant less than one appearing in \eqref{eq:lambda1}
and \eqref{eq:lambda2}.
Let $m,n \in \Z$ satisfy $m\cdot n >0$ and 
$\lambda_X^{|m|} + \lambda_X^{|n|} < 1.$
Then we have
\begin{enumerate}
\renewcommand{\theenumi}{\roman{enumi}}
\renewcommand{\labelenumi}{\textup{(\theenumi)}}
\item
For $(x,y) \in G_\phi^{s,n},\, (y,z) \in G_\phi^{s,m},$ we have
\begin{equation*}
(x,z) \text{ belongs to } G_\phi^{s,0} \text{  if } n,m<0, \text{  and }  
(x,z) \text{ belongs to } G_\phi^{s,n+m} \text{  if } n,m>0.
\end{equation*}
\item
For $(x,y) \in G_\phi^{u,n},\, (y,z) \in G_\phi^{u,m},$ we have
\begin{equation*}
(x,z) \text{ belongs to } G_\phi^{u,0} \text{  if } n,m<0, \text{  and }  
(x,z) \text{ belongs to } G_\phi^{u,n+m} \text{  if } n,m>0.
\end{equation*}
\end{enumerate}
\end{lemma}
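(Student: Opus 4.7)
The plan is to prove part (i); part (ii) then follows by applying (i) to the inverse dynamics $(X,\phi^{-1})$, since the unstable relation of $(X,\phi)$ coincides with the stable relation of $(X,\phi^{-1})$ with indices negated, and the Smale space constants $\epsilon_X,\lambda_X$ play symmetric roles in \eqref{eq:lambda1}--\eqref{eq:lambda2}. For (i), the core tools are the contraction of stable manifolds under forward iteration and Ruelle's bracket identity $[u,[v,w]]=[u,w]$, which holds whenever all three brackets are defined on $\Delta_{\epsilon_X}$.

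First treat the case $n,m>0$. From $(x,y)\in G_\phi^{s,n}$ one has $\phi^n(y)\in X^s(\phi^n(x),\epsilon_X)$, and iterating $\phi$ forward another $m$ times together with the contraction $\phi(X^s(u,\epsilon))\subset X^s(\phi(u),\lambda_X\epsilon)$ gives $\phi^{n+m}(y)\in X^s(\phi^{n+m}(x),\lambda_X^{m}\epsilon_X)$, so in particular
\[
d(\phi^{n+m}(x),\phi^{n+m}(y))\le \lambda_X^{m}\epsilon_X.
\]
Symmetrically, $\phi^{n+m}(z)\in X^s(\phi^{n+m}(y),\lambda_X^{n}\epsilon_X)$ and $d(\phi^{n+m}(y),\phi^{n+m}(z))\le \lambda_X^{n}\epsilon_X$. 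Because $\lambda_X^{m}+\lambda_X^{n}<1$, the triangle inequality forces $d(\phi^{n+m}(x),\phi^{n+m}(z))<\epsilon_X$, so the bracket $[\phi^{n+m}(z),\phi^{n+m}(x)]$ is defined. Writing $\tilde x,\tilde y,\tilde z$ for $\phi^{n+m}(x),\phi^{n+m}(y),\phi^{n+m}(z)$, the relations $[\tilde y,\tilde x]=\tilde y$ and $[\tilde z,\tilde y]=\tilde z$ hold from the definition of the stable manifolds, and Ruelle's axiom yields
\[
[\tilde z,\tilde x]=[\tilde z,[\tilde y,\tilde x]]=[\tilde z,\tilde y]=\tilde z,
\]
so $\tilde z\in X^s(\tilde x,\epsilon_X)$, which is exactly $(x,z)\in G_\phi^{s,n+m}$.

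The case $n,m<0$ is entirely analogous and in fact simpler, since no preliminary iteration is required. Iterating $\phi$ forward $|n|$ times starting from $\phi^n(y)\in X^s(\phi^n(x),\epsilon_X)$ produces $y\in X^s(x,\lambda_X^{|n|}\epsilon_X)$, and likewise $z\in X^s(y,\lambda_X^{|m|}\epsilon_X)$. The triangle inequality and the hypothesis give $d(x,z)<\epsilon_X$, the same bracket computation then delivers $[z,x]=z$, and hence $(x,z)\in G_\phi^{s,0}$.

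The main obstacle is ensuring that every pair of points appearing in the argument lies in $\Delta_{\epsilon_X}$, for only then are the brackets defined and the axiom applicable; this is the precise role played by the hypothesis $\lambda_X^{|m|}+\lambda_X^{|n|}<1$ in the triangle inequality. Everything else is bookkeeping with the contraction rate and with the definitions of $G_\phi^{s,n}$. Part (ii) is then obtained by rerunning the proof for $(X,\phi^{-1})$, using the identification $G_\phi^{u,n}=G_{\phi^{-1}}^{s,-n}$ that follows by swapping the roles of \eqref{eq:lambda1} and \eqref{eq:lambda2}.
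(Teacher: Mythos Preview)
Your proof is correct and follows essentially the same route as the paper: contract the stable manifolds via iterated application of \eqref{eq:lambda1}, use the triangle inequality together with $\lambda_X^{|m|}+\lambda_X^{|n|}<1$ to guarantee the needed bracket is defined, and then invoke Ruelle's bracket axiom to conclude. The only cosmetic differences are that the paper treats the case $n,m<0$ first and computes $[x,z]=x$ rather than your equivalent $[z,x]=z$, and for (ii) the paper simply says ``similarly shown'' where you reduce to (i) via $(X,\phi^{-1})$.
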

\begin{proof}
(i) We first assume that  $n, m <0.$
Put $k =-n, \, l= -m \in \N.$
For
$(x,y) \in G_\phi^{s,n},$ 
put
$x'= \phi^n(x), y' = \phi^n(y)
$
so that 
$(x', y') \in G_\phi^{s,0}.$
We then have
$$
d(x,y) =d(\phi^k(x'), \phi^k(y')) 
\le 
\lambda_X^k d(x', y') 
< 
\lambda_X^k \cdot \epsilon_X
$$
and
$$
[x,y] = [\phi^k(x'),\phi^k(y')] = \phi^k([x',y']) = \phi^k(x') = x.
$$
Similarly for $(y,z) \in G_\phi^{s,m},$
we have
$
d(y, z) < \lambda_X^l\cdot \epsilon_X
$ 
and
$
[y,z] = y,
$
so that we know
\begin{gather*}
d(x,z) \le d(x,y) + d(y,z) < (\lambda_X^k + \lambda_X^l) \cdot \epsilon_X <\epsilon_X, \\
[x,z] = [[x, y], z]] = [x,[y,z]] = [x,y] = x,
\end{gather*}
proving  $(x,z) \in G_\phi^{s,0}.$

We next assume that 
$n, m >0.$
For
$(x,y) \in G_\phi^{s,n}$ 
and
$(y,z) \in G_\phi^{s,m},$
so that we see
$(\phi^n(x), \phi^n(y)),(\phi^m(y), \phi^m(z)) \in G_\phi^{s,0}.$
Hence we have
$$
d(\phi^{n+m}(x),\phi^{n+m}(y)) < \lambda_X^m \cdot \epsilon_X, \qquad
d(\phi^{n+m}(y),\phi^{n+m}(z)) < \lambda_X^n\cdot \epsilon_X
$$
so that 
\begin{equation*}
d(\phi^{n+m}(x),\phi^{n+m}(z)) 
<   \lambda_X^m\cdot \epsilon_X + \lambda_X^n \cdot \epsilon_X 
<\epsilon_X.
\end{equation*}
We also have 
\begin{align*}
[\phi^{n+m}(x),\phi^{n+m}(z)]
& =  [[ \phi^{n+m}(x),\phi^{n+m}(y)],\phi^{n+m}(z)] \\
& =  [\phi^{n+m}(x),[\phi^{n+m}(y),\phi^{n+m}(z)]] \\
& =[\phi^{n+m}(x),\phi^{n+m}(y)] \\
& =\phi^m([\phi^{n}(x),\phi^{n}(y)]) =\phi^{n+m}(x),
\end{align*} 
proving  $(\phi^{n+m}(x),\phi^{n+m}(z)) \in G_\phi^{s,n+m}.$

(ii) is similarly shown to (i).
\end{proof}
\begin{lemma}\label{lem:3.5.1}
Let $(X,\phi)$ and $(Y,\psi)$ be Smale spaces.
Suppose that there exist a homeomorphism
 $h: X \longrightarrow Y,$
 a continuous function $c_1:X \longrightarrow \Z$
and a nonnegative integer $K\in\Zp$ such that 
\begin{equation}
 (\psi^{c_1(x)}(h(x)), h(\phi(x)))\in G_\psi^{a,K}, \qquad x \in X.  \label{eq:3.5.1.1}
\end{equation}
Then there exists a nonnegative integer $K_n \in \Zp$ for each $n \in \Z$ such that 
\begin{equation}
 (\psi^{c_1^n(x)}(h(x)), h(\phi^n(x)))\in G_\psi^{a,K_n}, \qquad x \in X.  \label{eq:3.5.1.11}
\end{equation}
\end{lemma}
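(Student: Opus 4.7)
The plan is to argue by induction on $|n|$, writing the pair $(\psi^{c_1^n(x)}(h(x)), h(\phi^n(x)))$ as a concatenation of $|n|$ asymptotic pairs each arising from the hypothesis \eqref{eq:3.5.1.1} applied at some iterate of $x$. The case $n=0$ holds trivially with $K_0=0$, and $n=1$ is just \eqref{eq:3.5.1.1} with $K_1=K$; the case $n=-1$ will follow from the same hypothesis after pushing by a power of $\psi$ and inverting the pair. The principal issue is keeping the $G_\psi^a$-index bounded uniformly in $x$ at each step.

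Two preliminary uniformity observations do most of the bookkeeping. Since $X$ is compact and $c_1\colon X\to\Z$ is continuous, its image is finite and $M:=\sup_{x\in X}|c_1(x)|<\infty$. Unwinding the definitions $G_\psi^{s,m}=(\psi\times\psi)^{-m}(G_\psi^{s,0})$ and $G_\psi^{u,m}=(\psi\times\psi)^{m}(G_\psi^{u,0})$ together with the filtration \eqref{eq:Gphin} shows that for any $(a,b)\in G_\psi^{a,k}$ and any $j\in\Z$ one has $(\psi^{j}(a),\psi^{j}(b))\in G_\psi^{a,\,k+|j|}$, i.e.\ pushing an asymptotic pair forward by $\psi^{j}$ raises its index by at most $|j|$.

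For the inductive step $n\to n+1$ with $n\ge 1$, I combine the inductive hypothesis $(\psi^{c_1^n(x)}(h(x)),h(\phi^n(x)))\in G_\psi^{a,K_n}$ with \eqref{eq:3.5.1.1} applied at $\phi^n(x)$, namely $(\psi^{c_1(\phi^n(x))}(h(\phi^n(x))),h(\phi^{n+1}(x)))\in G_\psi^{a,K}$. Pushing the first pair by $\psi^{c_1(\phi^n(x))}$ and using the preceding observation places it in $G_\psi^{a,\,K_n+M}$ with right coordinate equal to the left coordinate of the second pair. Now choose an integer $L$, independent of $x$, with $L\ge \max(K_n+M,K)$ and large enough that the composition condition of Lemma \ref{lem:3.4-1} is satisfied at index $L$ for the Smale space $(Y,\psi)$; by the monotonicity \eqref{eq:Gphin} both pairs then lie in $G_\psi^{s,L}\cap G_\psi^{u,L}$, and Lemma \ref{lem:3.4-1} concatenates them to a pair in $G_\psi^{a,2L}$, so $K_{n+1}:=2L$ works.

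The descending induction for negative $n$ is symmetric: the case $n=-1$ is obtained by applying \eqref{eq:3.5.1.1} at $\phi^{-1}(x)$, pushing by $\psi^{-c_1(\phi^{-1}(x))}$ (raising the index by at most $M$), and inverting the resulting pair; the step $n\to n-1$ is then identical to the positive case with the roles of the two input pairs reversed. The main obstacle — and the reason the proof requires Lemma \ref{lem:3.4-1} rather than just the equivalence-relation property of $G_\psi^a$ — is that composing two asymptotic pairs does not a priori yield any concrete index bound, so it is the hyperbolic composition estimate of Lemma \ref{lem:3.4-1}, together with the boundedness $M$, that keeps $K_n$ finite and independent of $x$.
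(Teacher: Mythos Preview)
Your proof is correct and follows essentially the same approach as the paper's own proof: both argue by induction on $|n|$, bound $|c_1|$ by compactness of $X$, decompose $(\psi^{c_1^{n+1}(x)}(h(x)),h(\phi^{n+1}(x)))$ as the concatenation of the inductively known pair pushed by $\psi^{c_1(\phi^n(x))}$ with the hypothesis applied at $\phi^n(x)$, and then invoke Lemma~\ref{lem:3.4-1} to obtain a uniform index bound for the composite. The only cosmetic difference is that the paper, instead of enlarging both indices to a common $L$ with $2\lambda_Y^L<1$, directly assumes $K_m+C_1\ge K$ and $\lambda_Y^K<1/2$ to get the composite in $G_\psi^{a,K_m+C_1+K}$; your symmetrized version with $K_{n+1}=2L$ is an equivalent packaging of the same estimate.
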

\begin{proof}
Put $C_1 = \Max\{|c_1(x) | \mid x \in X\}.$
Assume that there exists 
a nonnegative integer $K_m \in \Zp$ for a fixed $n=m \in \Zp$ such that 
\begin{equation}
 (\psi^{c_1^m(x)}(h(x)), h(\phi^m(x)))\in G_\psi^{a,K_m}, \qquad x \in X.  \label{eq:3.5.1.2}
\end{equation}
Since $G_\psi^{a, n} \subset G_\psi^{a, n+1}$ for any $n \in \N,$
one may assume that the nonnegative integers $K$ in  \eqref{eq:3.5.1.1}  
and $K_m$ in \eqref{eq:3.5.1.2} satisfy  
$
0<\lambda_X^K < \frac{1}{2}
$
and
$
K_m + C_1 \ge K.
$
We then have
\begin{align*}
& (\psi^{c_1^{m+1}(x)}(h(x)), h(\phi^{m+1}(x))) \\
= & (\psi\times\psi)^{c_1(\phi^m(x))}(\psi^{c_1^{m}(x)}(h(x)), h(\phi^{m}(x))) \cdot
    (\psi^{c_1(\phi^m(x))}(h(\phi^m(x))), h(\phi(\phi^{m}(x))) ).
\end{align*}
By \eqref{eq:3.5.1.2}, we have 
\begin{equation*}
(\psi\times\psi)^{c_1(\phi^m(x))}(\psi^{c_1^{m}(x)}(h(x)), h(\phi^{m}(x)))
\in G_\psi^{s,K_m -c_1(\phi^m(x))} \subset G_\psi^{s,K_m + C_1}.
\end{equation*}
Together with the assumption \eqref{eq:3.5.1.1},
Lemma \ref{lem:3.4-1} (i) tells us that 
\begin{equation*}
(\psi^{c_1^{m+1}(x)}(h(x)), h(\phi^{m+1}(x))) \in G_\psi^{s,K_m + C_1 + K}.
\end{equation*}
By induction,  
there exists a nonnegative integer  $K_n \in \Zp$ such that   
\begin{equation}
(\psi^{c_1^{n}(x)}(h(x)), h(\phi^{n}(x))) \in G_\psi^{s,K_n} \quad \text{ for all } n \in \N.
\label{eq:3.5.1.4}
\end{equation}
Similarly one may show the above \eqref{eq:3.5.1.4}
for the unstable set $G_\psi^{u, K_n}$,
so that we obtain 
\eqref{eq:3.5.1.11} for positive integers $n \in \N.$

For negative integers $n \in \Z,$
by taking $\phi^{-1}(x)$ as $x$ in \eqref{eq:3.5.1.1}, 
one may show that there exists $K' \in \Zp$ such that 
\begin{equation*}
(\psi^{c_1^{-1}(x)}(h(x)), h(\phi^{-1}(x))) \in G_\psi^{a,K'},  \qquad x \in X.
\end{equation*}
By a similar manner to the above discussion showing \eqref{eq:3.5.1.11} 
for positive integers $n \in \N,$
one may find a nonnegative integer $K_{-n}$ for $n \in \N$ such that  
\begin{equation*}
(\psi^{c_1^{-n}(x)}(h(x)), h(\phi^{-n}(x))) \in G_\psi^{a,K_{-n}} \quad \text{ for all } n \in \N
\end{equation*}
by using Lemma \ref{lem:3.4-1},
so that the desired assertion is verified.
\end{proof}
 Hence we have the following lemma.
 
\begin{lemma}\label{lem:3.3}
Let $(X,\phi)$ and $(Y,\psi)$ be Smale spaces.
Suppose that there exist a homeomorphism
 $h: X \longrightarrow Y$
 and  continuous functions 
 $c_1:X \longrightarrow \Z,$
 $k_0:X \longrightarrow \Zp$
 such that 
 \begin{equation}
(\psi^{c_1(x)}(h(x)), h(\phi(x)))\in G_\psi^{a,k_0(x)}, \qquad x \in X. 
\label{eq:3.5.2.1}
\end{equation}
Then there exists a sequence of continuous functions
$k_{1,n}: X \longrightarrow \Zp$ for $n \in \Z$ such that 
\begin{equation}
\xi_1^n(x):= ( \psi^{c_1^n(x)}(h(x)),  h(\phi^n(x))) \in  G_\psi^{a,k_{1,n}(x)}
\text{ for } x \in X \text{  and } n \in \Z.
\end{equation}
Hence $\xi_1^n(x)$ belongs to $G_\psi^a$ and
$\xi_1^n: X \longrightarrow G_\psi^a$
is continuous.
\end{lemma}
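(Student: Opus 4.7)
The plan is to reduce Lemma \ref{lem:3.3} to the already established Lemma \ref{lem:3.5.1} by exploiting the compactness of $X$. Lemma \ref{lem:3.5.1} is stated with a constant bound $K \in \Zp$, whereas here the hypothesis involves a function $k_0: X \to \Zp$; however, since $\Zp$ is discrete, continuity of $k_0$ makes it locally constant, and compactness of $X$ then forces $k_0$ to attain only finitely many values. So my first step is to set
\begin{equation*}
K := \max_{x \in X} k_0(x) \in \Zp.
\end{equation*}

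Using the inclusions $G_\psi^{a,n} \subset G_\psi^{a,n+1}$ recorded in \eqref{eq:Gphin}, the hypothesis \eqref{eq:3.5.2.1} upgrades to
\begin{equation*}
(\psi^{c_1(x)}(h(x)), h(\phi(x))) \in G_\psi^{a,K}, \qquad x \in X,
\end{equation*}
which is precisely the hypothesis of Lemma \ref{lem:3.5.1}. Applying that lemma, I obtain, for each $n \in \Z$, a nonnegative integer $K_n \in \Zp$ such that $\xi_1^n(x) = (\psi^{c_1^n(x)}(h(x)), h(\phi^n(x))) \in G_\psi^{a, K_n}$ for every $x \in X$. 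I then define $k_{1,n}: X \to \Zp$ to be the constant function with value $K_n$; this is evidently continuous and satisfies the containment required by the lemma.

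For the final continuity assertion, I would observe that $\xi_1^n$ factors through $G_\psi^{a, K_n}$, which inherits the subspace topology from $X \times X$. Because $c_1^n: X \to \Z$ is continuous into a discrete space and $X$ is compact, $c_1^n$ is locally constant and takes only finitely many integer values; consequently, the maps $x \mapsto \psi^{c_1^n(x)}(h(x))$ and $x \mapsto h(\phi^n(x))$ are continuous as $X \to X$. Hence $\xi_1^n: X \to G_\psi^{a, K_n}$ is continuous, and composing with the continuous inclusion $G_\psi^{a, K_n} \hookrightarrow G_\psi^a$ (built into the inductive-limit topology) yields continuity of $\xi_1^n: X \to G_\psi^a$. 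I do not expect a substantive obstacle: the real iteration work (propagating the single-step bound to all $n \in \Z$ by means of Lemma \ref{lem:3.4-1}) was already carried out in Lemma \ref{lem:3.5.1}, and the passage from a variable bound $k_0(x)$ to a constant bound $K$ is a one-line compactness argument.
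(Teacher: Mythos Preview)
Your proof is correct and matches the paper's approach essentially line for line: take $K=\max_{x\in X}k_0(x)$ by compactness, upgrade the hypothesis to $G_\psi^{a,K}$ via \eqref{eq:Gphin}, invoke Lemma~\ref{lem:3.5.1}, and set $k_{1,n}\equiv K_n$. The additional paragraph justifying continuity of $\xi_1^n$ is a welcome elaboration of what the paper leaves implicit.
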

\begin{proof}
Put the nonnegative integer 
 $K = \Max \{ k_0(x) \mid x \in X \}.$
 Then the condition  
 \eqref{eq:3.5.2.1} implies 
  \begin{equation*}
(\psi^{c_1(x)}(h(x)), h(\phi(x)))\in G_\psi^{a,K}, \qquad x \in X. 
\end{equation*}
 Hence by Lemma \ref{lem:3.5.1},
 one may take the desired continuous function
 $k_{1,n}(x)$ as a constant integer $K_n$ for each $n \in \Z.$ 
\end{proof}

We provide one more lemma.

\begin{lemma}
Both the condition (2) for $n=1$ in Definition \ref{def:acoe}:
\begin{equation}
c_2(y) + d_2(\psi(y), \psi(w)) = c_2(w) + d_2(y,w), \qquad (y,w) \in G_\psi^a 
\label{eq:2.4.1} 
\end{equation}
and
the condition (v) for $n=1$ in Definition \ref{def:acoe}:
\begin{equation}
c_2^{c_1(x)}(h(x)) + d_2(\psi^{c_1(x)}(h(x)), h(\phi(x))) = 1, \qquad x \in X 
\label{eq:2.4.2} 
\end{equation}
imply 
the condition (v) for $n\in \Z$ in Definition \ref{def:acoe}:
\begin{equation}
c_2^{c_1^n(x)}(h(x)) + d_2(\psi^{c_1^n(x)}(h(x)), h(\phi^n(x))) = n, \qquad x \in X,\, n \in \Z. 
\label{eq:vn} 
\end{equation}
Similarly, 
both the condition (1) for $n=1$ in Definition \ref{def:acoe}
and
the condition (vi) for $n=1$ in Definition \ref{def:acoe}
imply 
the condition (vi) for $n\in \Z$ in Definition \ref{def:acoe}.
\end{lemma}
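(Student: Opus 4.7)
The plan is to proceed by induction on $n\in\Z$, combining the one-cocycle identity \eqref{eq:onecocycle} for $c_1,c_2$ with the two-cocycle identity \eqref{eq:2cocycle} for $d_2$. First I will observe that Lemma \ref{lem:five} applied to $(Y,\psi)$ shows that the hypothesis \eqref{eq:2.4.1} is equivalent to the full multi-step version of condition (2) of Definition \ref{def:acoe}:
\[
c_2^m(y) + d_2(\psi^m(y), \psi^m(w)) = c_2^m(w) + d_2(y, w), \qquad (y,w)\in G_\psi^a,\; m\in\Z.
\]
Denote the left-hand side of \eqref{eq:vn} by $F_n(x)$. Then $F_0(x)=0$ because $c_2^0\equiv 0$ and $d_2(y,y)=0$, while $F_1(x)=1$ is precisely the hypothesis \eqref{eq:2.4.2}.

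For the inductive step I will establish the one-cocycle-type identity
\[
F_{n+1}(x) = F_n(x) + F_1(\phi^n(x)), \qquad x\in X,\; n\in\Z.
\]
Using the cocycle identity for $c_1$, write $c_1^{n+1}(x) = c_1^n(x) + c_1(\phi^n(x))$, and expand $c_2^{c_1^{n+1}(x)}(h(x))$ similarly via the cocycle identity for $c_2$. By Lemma \ref{lem:3.3} the pair $\xi_1^n(x)$ lies in $G_\psi^a$, so in the groupoid $G_\psi^a$ we have the decomposition
\[
\xi_1^{n+1}(x) = \bigl((\psi\times\psi)^{c_1(\phi^n(x))}\xi_1^n(x)\bigr)\cdot \xi_1^1(\phi^n(x)).
\]
Splitting $d_2(\xi_1^{n+1}(x))$ across this product by \eqref{eq:2cocycle}, and then applying the multi-step form of \eqref{eq:2.4.1} with $(y,w)=\xi_1^n(x)$ and $m=c_1(\phi^n(x))$ to rewrite the $d_2$-contribution of the transported factor, the two $c_2^{c_1(\phi^n(x))}(\psi^{c_1^n(x)}(h(x)))$ terms cancel and one recovers precisely $F_n(x) + F_1(\phi^n(x))$.

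Forward induction from $F_1=1$ via $F_{n+1}=F_n+1$ gives $F_n(x)=n$ for every $n\geq 1$, and backward induction using the same identity at $n=-1,-2,\dots$ starting from $F_0=0$ gives $F_n(x)=n$ for every $n\leq 0$, establishing \eqref{eq:vn} for all $n\in\Z$. The companion statement, that (1) and (vi) for $n=1$ imply (vi) for all $n\in\Z$, is proven by the same argument with the roles of $(X,\phi)$ and $(Y,\psi)$ and of the subscripts $1,2$ interchanged. The only real bookkeeping issue is verifying the cocycle cancellation above and ensuring that all intermediate pairs lie in the asymptotic groupoid so that \eqref{eq:2cocycle} can be invoked; this is guaranteed throughout by Lemma \ref{lem:3.3}.
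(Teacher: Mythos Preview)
Your proof is correct and follows essentially the same strategy as the paper: upgrade \eqref{eq:2.4.1} to its multi-step form via Lemma~\ref{lem:five}, then induct on $n$ using the one-cocycle identities for $c_1,c_2$ and the two-cocycle identity for $d_2$. Your packaging is somewhat tidier: you isolate the single recursion $F_{n+1}(x)=F_n(x)+F_1(\phi^n(x))$ and run it in both directions, whereas the paper treats positive $n$ via the companion decomposition $F_{m+1}(x)=F_1(x)+F_m(\phi(x))$ and then handles the negative case separately, first establishing $F_{-1}(x)=-1$ by a direct computation before inducting downward. The ingredients and the logic are the same; your formulation just avoids the case split.
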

\begin{proof}
We will first prove the equality \eqref{eq:vn} for positive integers $n$ by induction on $n \in \N.$ 
By Lemma \ref{lem:five}, the equality \eqref{eq:2.4.1} is equivalent to the equality 
\begin{equation}
c_2^m(y) + d_2(\psi^m(y), \psi^m(w)) = c_2^m(w) + d_2(y,w), \qquad (y,w) \in G_\psi^a, m \in \Z.
\label{eq:2.4.4} 
\end{equation}
Assume the equality \eqref{eq:vn} for a fixed $n=m \in \N$ and all $x \in X.$
Take $\phi(x)$ for $x$ in \eqref{eq:vn} for a fixed $m \in \N$, we have
\begin{equation}
c_2^{c_1^m(\phi(x))}(h(\phi(x))) + d_2(\psi^{c_1^m(\phi(x))}(h(\phi(x))), h(\phi^{m+1}(x))) = m, \qquad x \in X. 
\label{eq:maru1} 
\end{equation}
Take $c_1^m(\phi(x)), \psi^{c_1(x)}(h(x)), h(\phi(x))$ as $m, y, w,$ respectively 
 in \eqref{eq:2.4.4},
so that we have 
\begin{align}
& c_2^{c_1^m(\phi(x))}(\psi^{c_1(x)}(h(x))) 
+ d_2(\psi^{c_1^m(\phi(x))}(\psi^{c_1(x)}(h(x))),
\psi^{c_1^m(\phi(x))}(h(\phi(x)))) \\
 =&  c_2^{c_1^m(\phi(x))}(h(\phi(x))) + d_2(\psi^{c_1(x)}(h(x)), h(\phi(x))). 
\label{eq:2.4.6} 
\end{align}
By  \eqref{eq:2.4.6} with \eqref{eq:2.4.2},
we have
\begin{align}
& c_2^{c_1^m(\phi(x))}(\psi^{c_1(x)}(h(x))) 
+ d_2(\psi^{c_1^m(\phi(x))}(\psi^{c_1(x)}(h(x))),
\psi^{c_1^m(\phi(x))}(h(\phi(x)))) \\
 =&  c_2^{c_1^m(\phi(x))}(h(\phi(x))) + 1 - c_2^{c_1(x)}(h(x)). \label{eq:maru2} 
\end{align}
By \eqref{eq:maru1} and \eqref{eq:maru2}, 
we have
\begin{equation*}
 c_2^{c_1^m(\phi(x))}(\psi^{c_1(x)}(h(x))) 
+ d_2(\psi^{c_1^m(\phi(x))}(\psi^{c_1(x)}(h(x))),
h(\phi^{m+1}(x))) 
 = m+1 -  c_2^{c_1(x)}(h(x)).  
\end{equation*}
 This shows the equality
\begin{equation*}
c_2^{c_1^{m+1}(x)}(h(x)) + d_2(\psi^{c_1^{m+1}(x)}(h(x)), h(\phi^{m+1}(x))) = m+1, \qquad x \in X,
\end{equation*}
proving the equality \eqref{eq:vn} holds for $n=m+1$ and hence for all $n \in \N.$ 

We will next prove \eqref{eq:vn} for negative integers $n.$
Take $c_1(\phi^{-1}(x)),
 \psi^{-c_1(\phi^{-1}(x))}(h(x)), 
h(\phi^{-1}(x))$
as $m,y, w,$ respectively
in \eqref{eq:2.4.4}, so that 
\begin{align*}
& c_2^{c_1(\phi^{-1}(x))}(\psi^{-c_1(\phi^{-1}(x))}(h(x))) 
+ d_2(h(x), \psi^{c_1(\phi^{-1}(x))}(h(\phi^{-1}(x)))) \\
= & c_2^{c_1(\phi^{-1}(x))}(h(\phi^{-1}(x))) 
+ d_2(\psi^{-c_1(\phi^{-1}(x))}(h(x)),h(\phi^{-1}(x))).
\end{align*}
It then follows that 
\begin{align*}
& c_2^{c_1^{-1}(x)}(h(x)) + d_2(\psi^{c_1^{-1}(x)}(h(x)), h(\phi^{-1}(x))) \\
=
& c_2^{-c_1(\phi^{-1}(x))}(h(x)) + d_2(\psi^{-c_1(\phi^{-1}(x))}(h(x)), h(\phi^{-1}(x))) \\
=
& c_2^{-c_1(\phi^{-1}(x))}(h(x)) +c_2^{c_1(\phi^{-1}(x))}(\psi^{-c_1(\phi^{-1}(x))}(h(x))) \\
& + d_2(h(x), \psi^{c_1(\phi^{-1}(x))}(h(\phi^{-1}(x)))) 
 -c_2^{c_1(\phi^{-1}(x))}(h(\phi^{-1}(x))) \\
=
& - d_2( \psi^{c_1(\phi^{-1}(x))}(h(\phi^{-1}(x))), h(x)) 
 -c_2^{c_1(\phi^{-1}(x))}(h(\phi^{-1}(x))) = -1
\end{align*}
because of the equality \eqref{eq:2.4.2} for $\phi^{-1}(x)$ as $x$.
Hence the equality \eqref{eq:vn} holds for $n= -1.$
Assume the equality \eqref{eq:vn} holds for a fixed $n= -m$ for $m \in \N$
and for all $ x \in X.$
Take $\phi^{-1}(x)$ as $x$ in \eqref{eq:vn}, 
so that 
\begin{equation}
 c_2^{c_1^{-m}(\phi^{-1}(x))}(h(\phi^{-1}(x))) 
+ d_2(\psi^{c_1^{-m}(\phi^{-1}(x))}(h(\phi^{-1}(x))),
 h(\phi^{-(m+1)}(x))) 
=-m.\label{eq:3.41}
\end{equation}
Take $c_1^{-m}(\phi^{-1}(x)),
 \psi^{c_1^{-1}(x)}(h(x)), 
h(\phi^{-1}(x))$
as $m,y,w,$ respectively
in \eqref{eq:2.4.4}, so that we have with \eqref{eq:vn} for $n=-1$
\begin{align*}
& c_2^{c_1^{-m}(\phi^{-1}(x))}(\psi^{c_1^{-1}(x)}(h(x))) 
+ d_2(\psi^{c_1^{-m}(\phi^{-1}(x))}(\psi^{c_1^{-1}(x)}(h(x))), 
\psi^{c_1^{-m}(\phi^{-1}(x))}(h(\phi^{-1}(x)))) \\
= & 
c_2^{c_1^{-m}(\phi^{-1}(x))}(h(\phi^{-1}(x))) 
+ d_2(\psi^{c_1^{-1}(x)}(h(x)), h(\phi^{-1}(x))) \\
= & 
c_2^{c_1^{-m}(\phi^{-1}(x))}(h(\phi^{-1}(x))) 
+ (-1 - c_2^{c_1^{-1}(x)}(h(x))).
\end{align*}
By \eqref{eq:3.41}, we obtain that 
\begin{align*}
& c_2^{c_1^{-m}(\phi^{-1}(x))}(\psi^{c_1^{-1}(x)}(h(x))) 
+ d_2(\psi^{c_1^{-m}(\phi^{-1}(x)) +c_1^{-1}(x)}(h(x)), 
\psi^{c_1^{-m}(\phi^{-1}(x))}(h(\phi^{-1}(x)))) \\
= &
-m  -d_2(\psi^{c_1^{-m}(\phi^{-1}(x))}(h(\phi^{-1}(x))),
 h(\phi^{-(m+1)}(x)))
-1 - c_2^{c_1^{-1}(x)}(h(x)) \\
\intertext{and hence}
& c_2^{c_1^{-m}(\phi^{-1}(x))}(\psi^{c_1^{-1}(x)}(h(x))) 
+c_2^{c_1^{-1}(x)}(h(x)) \\
&+ d_2(\psi^{c_1^{-(m+1)}(x) }(h(x)), 
\psi^{c_1^{-m}(\phi^{-1}(x))}(h(\phi^{-1}(x)))) 
+d_2(\psi^{c_1^{-m}(\phi^{-1}(x))}(h(\phi^{-1}(x))), h(\phi^{-(m+1)}(x))) \\
= & -(m+1).
\end{align*}
We thus have
\begin{equation*}
c_2^{c_1^{-(m+1)}(x)}(h(x)) + d_2(\psi^{c_1^{-(m+1)}(x)}(h(x)), h(\phi^{-(m+1)}(x))) 
= {-(m+1)},
\end{equation*}
completing the induction.

Similarly we may prove the condition (vi) for $n\in \Z$ 
from both
the condition (vi) for $n=1$
and
the condition (1) for $n=1.$
\end{proof}
Therefore we may reformulate  Definition \ref{def:acoe} in the following way.
\begin{proposition}\label{prop:main1}
Two Smale spaces
$(X,\phi)$ and $(Y, \psi)$ are 
 asymptotically continuous orbit equivalent
if and only if there exist a homeomorphism
$h: X\longrightarrow Y$,
continuous functions
$
c_1:X\longrightarrow \Z, \, c_2:Y\longrightarrow \Z,
$
and two-cocycle functions
$
d_1: G_\phi^a \longrightarrow \Z,\, d_2: G_\psi^a \longrightarrow \Z
$
such that 
\begin{enumerate}
\renewcommand{\theenumi}{\arabic{enumi}}
\renewcommand{\labelenumi}{\textup{(\theenumi)}}
\item
$c_1(x) + d_1(\phi(x), \phi(z))
=c_1(z) + d_1(x, z), \quad
(x,z) \in G_\phi^a.$
\item
$c_2(y) + d_2(\psi(y), \psi(w))
=c_2(w) + d_2(y, w),\quad
(y, w) \in G_\psi^a.$
\end{enumerate}
and,
\begin{enumerate}
\renewcommand{\theenumi}{\roman{enumi}}
\renewcommand{\labelenumi}{\textup{(\theenumi)}}
\item
The pair
$( \psi^{c_1(x)}(h(x)),  h(\phi(x))) =:\xi_1(x)$
 belongs to $G_\psi^a$ for each $x \in X$,
and
the map
$\xi_1: x \in X \longrightarrow \xi_1(x) \in G_\psi^a$ is continuous. 
\item
The pair
$( \phi^{c_2(y)}(h^{-1}(y)),  h^{-1}(\psi(y))) =:\xi_2(y)$
 belongs to $G_\phi^a$ for each $y \in Y$,
and
the map
$\xi_2: y \in Y \longrightarrow \xi_2(y) \in G_\phi^a$ is continuous. 
\item
The pair 
$(\psi^{d_1(x,z)}(h(x)), h(z)) =:\eta_1(x,z)$ belongs to $G_\psi^a$
for each $(x,z) \in G_\phi^a$, and the map
$\eta_1:(x,z) \in G_\phi^a\longrightarrow \eta_1(x,z)\in G_\psi^a$ is continuous.
\item
The pair 
$(\phi^{d_2(y,w)}(h^{-1}(y)), h^{-1}(w)) =:\eta_2(y,w)$ belongs to $G_\phi^a$
for each $(y,w) \in G_\psi^a$, and the map
$\eta_2:(y,w) \in G_\psi^a\longrightarrow \eta_2(y,w)\in G_\phi^a$ is continuous.
\item
$c^{c_1(x)}_2(h(x)) + d_2(\psi^{c_1(x)}(h(x)), h(\phi(x))) =1, \quad x \in X.$
\item
$c^{c_2(y)}_1(h^{-1}(y)) + d_1(\phi^{c_2(y)}(h^{-1}(y)), h^{-1}(\psi(y))) =1, 
\qquad y \in Y.$
\item
$c^{d_1(x,z)}_2(h(x)) + d_2(\psi^{d_1(x,z)}(h(x)), h(z)) = 0, \quad (x,z) \in G_\phi^{a}.$
\item
$c^{d_2(y,w)}_1(h^{-1}(y)) + d_1(\phi^{d_2(y,w)}(h^{-1}(y)), h^{-1}(w)) = 0, 
\qquad (y,w) \in G_\psi^{a}.$
\end{enumerate}
\end{proposition}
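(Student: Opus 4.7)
The plan is to prove Proposition \ref{prop:main1} by establishing equivalence with Definition \ref{def:acoe}. The forward direction is immediate: the conditions of Proposition \ref{prop:main1} are obtained from those of Definition \ref{def:acoe} by specializing to $n=1$ for (i), (ii), (v), (vi) and to $m=1$ for (1), (2); conditions (iii), (iv), (vii), (viii) carry no $n$-quantifier and are literally the same in both formulations. So the content lies in the reverse direction, namely that the $n=1$ versions in Proposition \ref{prop:main1} are enough to recover the full $n \in \Z$ versions of Definition \ref{def:acoe}.

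First I would upgrade the cocycle identities. Condition (1) of Proposition \ref{prop:main1} is exactly condition (iv) of Lemma \ref{lem:five} for the data $(X,\phi,c_1,d_1)$, hence by that lemma it is equivalent to (iii), which is condition (1) of Definition \ref{def:acoe} for all $m \in \Z$. The same argument applied to $(Y,\psi,c_2,d_2)$ promotes condition (2) of Proposition \ref{prop:main1} to condition (2) of Definition \ref{def:acoe} for all $m \in \Z$.

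Next I would promote the ``target in asymptotic groupoid'' conditions (i) and (ii). The hypothesis is that $\xi_1 : X \longrightarrow G_\psi^a$ is continuous, and since $X$ is compact while $G_\psi^a = \bigcup_n G_\psi^{a,n}$ carries the inductive-limit topology, the image $\xi_1(X)$ must lie in a single level $G_\psi^{a,K}$. Thus the hypothesis of Lemma \ref{lem:3.3} is satisfied with the constant function $k_0 \equiv K$, and that lemma supplies, for every $n \in \Z$, a continuous $k_{1,n}$ with $\xi_1^n(x) \in G_\psi^{a,k_{1,n}(x)}$ together with continuity of $\xi_1^n : X \longrightarrow G_\psi^a$. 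This is precisely condition (i) of Definition \ref{def:acoe}; interchanging the roles of $(X,\phi)$ and $(Y,\psi)$ and using $h^{-1}$ yields condition (ii).

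Finally I would promote the integer identities (v) and (vi). Having already established (2) for all $m \in \Z$ in step one, the last lemma of the section (the one immediately preceding Proposition \ref{prop:main1}, which takes (2) plus (v) at $n=1$ as input) delivers (v) of Definition \ref{def:acoe} for all $n \in \Z$; the analogous statement of that lemma, with the roles of the two Smale spaces exchanged, promotes (vi). No step is needed for (iii), (iv), (vii), (viii) since those are $n$-free. The main obstacle is really one of bookkeeping rather than mathematics: all the substantive content has been packaged into Lemma \ref{lem:five} for the cocycle identities, Lemma \ref{lem:3.3} (resting on the contraction estimates of Lemma \ref{lem:3.4-1}) for uniform groupoid membership in $n$, and the final induction lemma for the scalar identities, so Proposition \ref{prop:main1} is assembled by invoking each of these in turn.
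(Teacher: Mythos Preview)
Your proposal is correct and follows essentially the same approach as the paper: the paper gives no explicit proof of Proposition~\ref{prop:main1} but simply writes ``Therefore we may reformulate Definition~\ref{def:acoe} in the following way,'' relying on exactly the three lemmas you invoke---Lemma~\ref{lem:five} for the cocycle identities (1),(2), Lemma~\ref{lem:3.3} (via Lemma~\ref{lem:3.5.1} and Lemma~\ref{lem:3.4-1}) for conditions (i),(ii), and the final unnamed lemma for (v),(vi). Your compactness argument extracting a uniform $K$ from continuity of $\xi_1:X\to G_\psi^a$ is precisely what the paper takes for granted in the Remark following the proposition.
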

It is straightforward to see that 
the inverse $(X, \phi^{-1})$ for a Smale space 
$(X,\phi)$ also becomes a Smale space in a natural way. 
Before ending this section,
we notice the following proposition which has been seen in \cite{MaCJM}.
\begin{proposition}[{\cite[Proposition 11.1]{MaCJM}}]
A Smale space $(X,\phi)$ is asymptotically continuous orbit equivalent to its inverse
$(X,\phi^{-1}).$
\end{proposition}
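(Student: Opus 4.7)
The plan is to exhibit explicit data witnessing asymptotic continuous orbit equivalence between $(X,\phi)$ and $(X,\phi^{-1})$, and then to verify the list of conditions in Proposition \ref{prop:main1}. The conceptual input is a single observation: the asymptotic equivalence relation $G_\phi^a$ depends only on the unordered pair $\{\phi,\phi^{-1}\}$. Indeed, by the characterization recorded in Lemma in the previous section,
$$
G_\phi^a=\{(x,y)\mid\lim_{n\to\infty}d(\phi^n(x),\phi^n(y))=\lim_{n\to\infty}d(\phi^{-n}(x),\phi^{-n}(y))=0\},
$$
and the right-hand side is manifestly invariant under replacing $\phi$ by $\phi^{-1}$. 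Hence $G_\phi^a=G_{\phi^{-1}}^a$ as topological groupoids on $X$.

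Writing $\psi=\phi^{-1}$, I propose to take
$$
h=\id_X,\qquad c_1\equiv -1,\qquad c_2\equiv -1,\qquad d_1\equiv 0,\qquad d_2\equiv 0.
$$
Then $h$ is trivially a homeomorphism $X\to X$, the functions $c_1,c_2:X\to\Z$ are continuous, and $d_1,d_2:G_\phi^a\to\Z$ are (trivially) two-cocycle functions. The cocycle identities (1) and (2) of Proposition \ref{prop:main1} reduce to $-1+0=-1+0$ and so hold.

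Next I would check the four pair conditions (i)--(iv). Since $c_1(x)=-1$ and $\psi=\phi^{-1}$, one has $\psi^{c_1(x)}(h(x))=\phi(x)=h(\phi(x))$, so the pair $\xi_1(x)=(\phi(x),\phi(x))$ is a unit of $G_\psi^a$, and the assignment $x\mapsto\xi_1(x)$ is continuous; condition (ii) is symmetric, yielding $\xi_2(y)=(\phi^{-1}(y),\phi^{-1}(y))$. Because $d_1=d_2=0$ and $G_\phi^a=G_\psi^a$, conditions (iii) and (iv) become $\eta_1(x,z)=(x,z)\in G_\psi^a$ and $\eta_2(y,w)=(y,w)\in G_\phi^a$, which are continuous by definition.

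For the four scalar identities (v)--(viii), the ones involving $d_1,d_2$ are immediate: conditions (vii) and (viii) reduce to $c_j^0(\cdot)+0=0$, which holds by the definition of $f^n$ at $n=0$. It remains to verify (v) and (vi). Using the defining formula $f^{-1}(x)=-f(\phi^{-1}(x))$ (respectively, with $\psi$) and $c_1\equiv c_2\equiv -1$, I get $c_2^{-1}(y)=1$ and $c_1^{-1}(x)=1$. Substituting,
$$
c_2^{c_1(x)}(h(x))+d_2(\psi^{c_1(x)}(h(x)),h(\phi(x)))=c_2^{-1}(x)+d_2(\phi(x),\phi(x))=1+0=1,
$$
and symmetrically for (vi). All conditions of Proposition \ref{prop:main1} are thus satisfied, giving the claimed asymptotic continuous orbit equivalence. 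There is no real obstacle to overcome; the only delicate point is keeping track of which dynamical system the superscripts $f^n$ are computed against (here $c_2^{-1}$ must be computed with respect to $\psi=\phi^{-1}$, not $\phi$), and this is exactly what makes the signs work out.
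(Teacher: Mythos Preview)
Your proof is correct and follows exactly the same approach as the paper: you choose the identical data $h=\id$, $c_1\equiv-1$, $c_2\equiv-1$, $d_1\equiv0$, $d_2\equiv0$ and verify the conditions of Proposition~\ref{prop:main1}. The paper's own proof simply states these choices and asserts that they work, whereas you spell out the verification in detail (including the useful observation that $G_\phi^a=G_{\phi^{-1}}^a$, which is what makes conditions (iii) and (iv) go through).
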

\begin{proof}
We set $Y = X, \psi = \phi^{-1}$
and
$h =\id, c_1\equiv -1, c_2\equiv -1, d_1\equiv 0, d_1\equiv 0$
in Proposition \ref{prop:main1}.
They satisfy all of the conditions in Proposition \ref{prop:main1}
to show
$(X, \phi) \underset{ACOE}{\sim}(X, \phi^{-1}).$ 
\end{proof}

\section{Asymptotic topological conjugacy}

In \cite{MaCJM},
an asymptotic version of  topological conjugacy in Smale spaces, called asymptotic conjugacy, 
 was introduced.
The definition of "asymptotically conjugate"   was given  
in the following way.
\begin{definition}[{\cite[Definition 6.1]{MaCJM}}] 
Two Smale spaces $(X,\phi)$ and 
$(Y,\psi)$ 
are {\it asymptotically conjugate}\/
if they are asymptotically continuous orbit equivalent such that 
one can take their cocycle functions as
$c_1\equiv 1, c_2 \equiv 1$ and $d_1\equiv 0, d_2\equiv 0$
in Definition \ref{def:acoe}. 
\end{definition}
In this section, instead of the above "asymptotically conjugate", 
we will introduce a notion called {\it asymptotically topologically conjugate}\/
in Smale spaces that is more natural and easier to formulate than "asymptotically conjugate".
\begin{definition}\label{def:atc}
Two Smale spaces $(X,\phi)$ and 
$(Y,\psi)$ 
are said to be {\it asymptotically topologically conjugate}\/
if there exists a homeomorphism $h:X\longrightarrow Y$ satisfying the following two conditions:

\medskip

(A):  $(h(x), h(z)) \in G_\psi^a$ if and only if $(x,z) \in G_\phi^a$,

and the map $\eta_1:(x,z) \in G_\phi^a\longrightarrow (h(x), h(z)) \in G_\psi^a$
is a homeomorphism.

\medskip

(B):  $(\psi(h(x)), h(\phi(x))) \in G_\psi^a$ for all $x \in X$, 

and the map 
$\xi_1:x \in X \longrightarrow (\psi(h(x)), h(\phi(x))) \in G_\psi^a$ is continuous.
\end{definition} 

 \begin{lemma}
 Under the conditions (A) and (B), the following condition (B') holds
  
  \medskip
  
 (B'):  $(\phi(h^{-1}(y)), h^{-1}(\psi(y))) \in G_\phi^a$ for all $y \in Y$,

 and the map $\xi_2:y \in Y \longrightarrow (\phi(h^{-1}(y)), h^{-1}(\psi(y))) \in G_\phi^a$
 is continuous. 
 \end{lemma}
 \begin{proof}
 Under the conditions (A) and (B),
 we have for $y \in Y$, putting $ x = h^{-1}(y) \in X$, 
 \begin{equation*}
 (h^{-1}(\psi(y)), \phi(h^{-1}(y))) 
 = \eta_1^{-1}(\psi(y), h(\phi(h^{-1}(y)))) 
 = \eta_1^{-1}(\psi(h(x)), h(\phi(x))).
 \end{equation*}
 By the conditions (A) and (B), we have 
 $\eta_1^{-1}(\psi(h(x)), h(\phi(x))) \in G_\phi^a$ 
so that 
 $(h^{-1}(\psi(y)), \phi(h^{-1}(y)))  \in G_\phi^a$ and hence
 $ (\phi(h^{-1}(y)), h^{-1}(\psi(y))) \in G_\phi^a$.
 Since $\xi_2 = \eta_1^{-1} \circ \xi_1\circ h^{-1}$,
 we know that $\xi_2: Y \longrightarrow G_\phi^a$ is continuous. 
 \end{proof}
 We note the following proposition.
 \begin{proposition}
Let $h: X \longrightarrow Y$ be a homeomorphism
between Smale spaces $(X,\phi) $ and $(Y,\psi)$.
\begin{enumerate}
\renewcommand{\theenumi}{\roman{enumi}}
\renewcommand{\labelenumi}{\textup{(\theenumi)}}
\item 
Assume that $(h(x), h(z))\in G_\psi^a$ for all $(x,z) \in G_\phi^a$.
Then $\eta_1:(x,z) \in G_\phi^a\longrightarrow (h(x), h(z)) \in G_\psi^a$
is continuous if and only if for any $N \in \N$, there exists $N_1\in \N$ 
such that $\eta_1(G_\phi^{a,N}) \subset G_\psi^{a,N_1}$.
\item 
Assume that $(\psi(h(x)), h(\phi(x)))\in G_\psi^a$ for all $x \in X$.
Then $\xi_1: x\in X \longrightarrow (\psi(h(x)), h(\phi(x))) \in G_\psi^a$
is continuous if and only if there exists $N_1\in \N$ 
such that $\xi_1(X) \subset G_\psi^{a,N_1}$.
\end{enumerate} 
 \end{proposition}
 \begin{proof}
 (i)
 Assume that $\eta_1:(x,z) \in G_\phi^a\longrightarrow (h(x), h(z)) \in G_\psi^a$
is continuous.
  Take an arbitrary fixed $N \in \N$. 
 Let $ \epsilon_X, \lambda_X$ 
be the Smale space constants for the Smale space $X$ as in Section 2.
 Since 
$G_\phi^{s,N} = \{ (x,z) \in X\times X \mid d(\phi^n(x), \phi^n(z)) <\epsilon_X, n=N, N+1,\dots \}$,
 we have
 \begin{equation*}
 \overline{G_\phi^{s,N}} \subset 
\{ (x,z) \in X\times X \mid d(\phi^n(x), \phi^n(z)) \le \epsilon_X, n=N, N+1,\dots \}.
 \end{equation*}
 By \eqref{eq:lambda1} with $0<\lambda_X<1$, 
we have for 
$(x,z) \in 
\overline{G_\phi^{s,N}}$
and
$n=N, N+1,\dots $
 \begin{equation*}
d(\phi^{n+1}(x), \phi^{n+1}(z)) 
\le \lambda_X d(\phi^n(x),\phi^n(z))
<  d(\phi^n(x),\phi^n(z))
\le \epsilon_X,
 \end{equation*}
 so that 
 $(x,z) \in G_\phi^{s,N+1}$.
 Hence we have
 $\overline{G_\phi^{s,N}} \subset G_\phi^{s,N+1}$.
 Similarly we have 
  $\overline{G_\phi^{u,N}} \subset G_\phi^{u,N+1}$.
 We thus have
 \begin{equation*}
 G_\phi^{a,N} \subset\overline{G_\phi^{a,N}} \subset G_\phi^{a,N+1}
 \end{equation*}
 and hence
 \begin{equation*}
 \eta_1(G_\phi^{a,N}) \subset \eta_1(\overline{G_\phi^{a,N}}) \subset\eta_1( G_\phi^{a,N+1}).
 \end{equation*}
 As $\eta_1:G_\phi^a\longrightarrow G_\psi^a$ is continuous,
 $\eta_1(\overline{G_\phi^{a,N}})$ is compact in $G_\psi^a$.
 Since
 $G_\psi^a = \cup_{n=0}^\infty G_\psi^{a,n}$ and we may find $N_1\in \N$ such that 
  $\eta_1(\overline{G_\phi^{a,N}})\subset G_\psi^{a,N_1}$ 
and hence
 $\eta_1(G_\phi^{a,N}) \subset G_\psi^{a,N_1}.$

 Conversely, assume that 
  for any $N \in \N$, there exists $N_1\in \N$ 
such that $\eta_1(G_\phi^{a,N}) \subset G_\psi^{a,N_1}$.
For any $(x,z) \in G_\phi^a$,
take $N\in \N$ such that 
$(x,z) \in G_\phi^{a,N}$.
Since
the topology of $G_\phi^{a,N}$ and $G_\psi^{a,N_1}$
are given by the relative topology in $X\times X$ and $Y\times Y$, respectively,
and $\eta_1: G_\phi^{a,N}\longrightarrow G_\psi^{a,N_1}$ 
is the restriction of 
$h\times h : X \times X\longrightarrow Y\times Y$,
we know that 
$\eta_1$ is continuous at $(x,z)$.
Therefore 
$\eta_1:G_\phi^a\longrightarrow G_\psi^a$ is continuous.

 (ii)
 Assume that $\xi_1: x\in X \longrightarrow (\psi(h(x)), h(\phi(x))) \in G_\psi^a$
is continuous.
Since $X$ is compact, so is $\xi_1(X)$ in $G_\psi^a = \cup_{n=0}^\infty G_\psi^{a,n}$.
One may find
$N_1\in \N$ 
such that $\xi_1(X) \subset G_\psi^{a,N_1}$.

 Conversely
 assume that 
there exists $N_1\in \N$ 
such that $\xi_1(X) \subset G_\psi^{a,N_1}$.
As $\xi_1(x) = (\psi(h(x)), h(\phi(x))) \in G_\psi^{a,N_1}$
and
the topology of $G_\psi^{a,N_1}$ 
is given by the relative topology in $Y \times Y$,
we easily know that $\xi_1:X\longrightarrow G_\psi^a$ is continuous. 
 \end{proof}
 
 \begin{lemma}\label{lem:N6.4}
Let $h: X \longrightarrow Y$ be a homeomorphism
between Smale spaces $(X,\phi) $ and $(Y,\psi)$.
Let 
continuous functions $c_1:X\longrightarrow \Z$,
 $c_2:Y\longrightarrow \Z$ 
 and
 two-cocycle functions
 $d_1:G_\phi^a\longrightarrow \Z$, $d_2:G_\psi^a\longrightarrow\Z$ 
be given by 
 $c_1\equiv 1, c_2\equiv 1, d_1\equiv 0, d_2\equiv 0$.
 Then $(h, c_1, c_2, d_1, d_2)$ 
satisfies all of the conditions of Proposition \ref{prop:main1}
 if and only if $h$ 
satisfies the conditions (A) and (B) in Definition \ref{def:atc}.
  \end{lemma}
 \begin{proof}
As  $c_1\equiv 1, c_2\equiv 1, d_1\equiv 0$ and $d_2\equiv 0$,
the conditions (1) and (2) 
 as well as (v), (vi), (vii) and (viii) 
 in Proposition  \ref{prop:main1} always hold.
  Since 
 \begin{gather*}
 \xi_1(x) = (\psi(h(x)), h(\phi(x))) \quad \text{ for } x \in X, \\
 \xi_2(y) = (\phi(h^{-1}(y)), h^{-1}(\psi(y))) \quad \text{ for } y \in Y,
 \end{gather*}
 both the conditions (i) and (ii) hold 
if and only if $h$ satisfies the conditions (B) and (B').
 Since
 \begin{gather*}
 \eta_1(x,z) = (h(x), h(z)) \quad \text{ for } (x,z) \in G_\phi^a, \\
  \eta_2(y,w) = (h^{-1}(y), h^{-1}(w)) \quad \text{ for } (y,w) \in G_\psi^a,
  \end{gather*}
 both the conditions (iii) and (iv) hold if and only if $h$ satisfies the condition (A).
  We thus obtain that 
 $(h, c_1, c_2, d_1, d_2)$ 
satisfies all of the conditions of Proposition \ref{prop:main1}
 if and only if $h$ satisfies the conditions (A) and (B) in Definition \ref{def:atc}. 
 \end{proof}
  Recall that two Smale spaces $(X,\phi) $ and $(Y,\psi)$ 
are asymptotically conjugate
if $(X,\phi) $ and $(Y,\psi)$ are asymptotically continuous orbit equivalent such that 
$c_1\equiv 1, c_2\equiv 1, d_1\equiv 0, d_2\equiv 0$.
 By Lemma \ref{lem:N6.4}
  we know that if   $(X,\phi) $ and $(Y,\psi)$ are asymptotically conjugate,
  then $h:X\longrightarrow Y$ satisfies the conditions (A) and (B) in Definition \ref{def:atc}.
  Conversely, if $h:X\longrightarrow Y$ satisfies the conditions (A) and (B),
  then  $(h, c_1, c_2, d_1, d_2)$ with $c_1\equiv 1, c_2\equiv 1, d_1\equiv 0, d_2\equiv 0$
  satisfies all of the conditions of Proposition \ref{prop:main1}.
  We thus have the following proposition.
\begin{proposition}\label{prop:acatc}
For Smale spaces $(X,\phi)$ and $(Y,\psi)$, the following two conditions are equivalent.
\begin{enumerate}
\renewcommand{\theenumi}{\roman{enumi}}
\renewcommand{\labelenumi}{\textup{(\theenumi)}}
\item 
$(X,\phi) $ and $(Y,\psi)$ are asymptotically conjugate.
\item 
$(X,\phi) $ and $(Y,\psi)$ are asymptotically topologically conjugate.
\end{enumerate}
\end{proposition}
  Therefore we have 
\begin{theorem}\label{thm:acatc}
Let $(X,\phi)$ and 
$(Y,\psi)$ are irreducible Smale spaces. 
The following conditions are equivalent:
\begin{enumerate}
\renewcommand{\theenumi}{\roman{enumi}}
\renewcommand{\labelenumi}{\textup{(\theenumi)}}
\item
$(X,\phi)$ and 
$(Y,\psi)$ are 
asymptotically topologically conjugate.
\item
There exists an isomorphism 
$\Phi:\R_\phi^a\longrightarrow\R_\psi^a$
of $C^*$-algebras such that 
$\Phi(C(X) )= C(Y)
$
and
$\Phi \circ \rho^\phi_t =\rho^\psi_{t}\circ \Phi$ for $t \in \T,$
\end{enumerate}
\end{theorem}
\begin{proof}
By \cite[Theorem 6.4]{MaCJM}, we know that 
$(X,\phi)$ and $(Y,\psi)$ are asymptotically conjugate if and only if 
the condition (ii) holds.
Therefore by Proposition \ref{prop:acatc},
we obtain the desired assertion.
\end{proof}

\section{Flip conjugacy and asymptotic continuous orbit equivalence}
Let $(X,\phi)$ be an irreducible Smale space.
Recall that for $p \in \Z$ with $p\ne 0$ 
a point $x \in X$ is called an asymptotic $p$-periodic point 
if $(\phi^p(x), x) \in G_\phi^a.$   

\begin{lemma}[{\cite[Lemma 5.3]{PutSp}}]
Let $x\in X$ be an asymptotic $p$-periodic point for some $p \in \Z$ with $p\ne 0.$ 
\begin{enumerate}
\renewcommand{\theenumi}{\roman{enumi}}
\renewcommand{\labelenumi}{\textup{(\theenumi)}}
\item 
The limit ${\displaystyle \lim_{k\to{\infty}}}\phi^{|p|k}(x)$ exists in $X$, 
denoted by $\eta^s(x),$ such that $\phi^p(\eta^s(x)) = \eta^s(x).$
\item 
The limit ${\displaystyle \lim_{k\to{\infty}}}\phi^{-|p|k}(x)$ 
exists in $X$, 
denoted by $\eta^u(x),$ such that $\phi^p(\eta^u(x)) = \eta^u(x).$
\end{enumerate}
\end{lemma}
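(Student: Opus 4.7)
The plan is to show that the forward iterates $\phi^{|p|k}(x)$ form a Cauchy sequence by exploiting the exponential contraction on local stable sets, and then to treat the backward iterates dually via local unstable sets. First I would reduce to the case $p>0$ and write $q=|p|$. Since $G_\phi^a$ is an equivalence relation that is invariant under $\phi\times\phi$, the condition $(\phi^p(x),x)\in G_\phi^a$ is equivalent to $(\phi^{-p}(x),x)\in G_\phi^a$, so in either case we have $(\phi^q(x),x)\in G_\phi^a$.

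For part (i), since $G_\phi^a=\bigcup_n G_\phi^{a,n}$, there exists $N\in\N$ with $(\phi^q(x),x)\in G_\phi^{a,N}$; in particular $(\phi^{N+q}(x),\phi^N(x))\in G_\phi^{s,0}$, i.e.\ $\phi^{N+q}(x)\in X^s(\phi^N(x),\epsilon_X)$. Iterating the hyperbolic contraction \eqref{eq:lambda1} yields
\begin{equation*}
d(\phi^{N+n+q}(x),\phi^{N+n}(x))\le \lambda_X^{\,n}\, d(\phi^{N+q}(x),\phi^N(x))
\qquad (n\ge 0).
\end{equation*}
Setting $C=d(\phi^{N+q}(x),\phi^N(x))$ and restricting to indices of the form $n=qk-N$ with $qk\ge N$, we get $d(\phi^{q(k+1)}(x),\phi^{qk}(x))\le \lambda_X^{qk-N}C$, which is summable in $k$ because $0<\lambda_X<1$. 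Hence $(\phi^{qk}(x))_{k\in\N}$ is Cauchy in the complete metric space $X$, and we define $\eta^s(x):=\lim_{k\to\infty}\phi^{qk}(x)$. By construction $\eta^s(x)\in\omega(x)$, and continuity of $\phi^p$ (together with $\phi^p(\phi^{qk}(x))=\phi^{qk}(\phi^p(x))$ and the fact that $\phi^p(x)$ and $x$ have the same asymptotic forward behaviour by the stable condition) gives
\begin{equation*}
\phi^p(\eta^s(x))=\lim_{k\to\infty}\phi^p(\phi^{qk}(x))=\eta^s(x).
\end{equation*}

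Part (ii) is entirely symmetric: from the same $N$ we have $(\phi^{-N+q}(x),\phi^{-N}(x))\in G_\phi^{u,0}$, and the contraction \eqref{eq:lambda2} of $\phi^{-1}$ on local unstable sets gives an analogous exponential decay of $d(\phi^{-n-q}(x),\phi^{-n}(x))$, hence a Cauchy sequence $(\phi^{-qk}(x))_{k\in\N}$ whose limit $\eta^u(x)\in\alpha(x)$ again satisfies $\phi^p(\eta^u(x))=\eta^u(x)$. There is no substantial obstacle here; the only points requiring care are the reduction $p\mapsto|p|$ via invariance and symmetry of $G_\phi^a$, and ensuring that the single integer $N$ provided by membership in $G_\phi^{a,N}$ simultaneously controls both the stable (forward) and unstable (backward) orbits, which is precisely what the definition $G_\phi^{a,N}=G_\phi^{s,N}\cap G_\phi^{u,N}$ delivers.
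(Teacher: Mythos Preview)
Your argument is correct. The paper's own proof is a one-line citation: it observes that $(\phi^p(x),x)\in G_\phi^a\subset G_\phi^s$ gives $\phi^p(x)\in X^s(x)$ and then invokes \cite[Lemma~5.3]{PutSp} for the existence of the limit and its $\phi^p$-invariance (with (ii) handled symmetrically). Your proof unpacks precisely this: you extract the $N$ with $(\phi^{N+q}(x),\phi^N(x))\in G_\phi^{s,0}$, use the contraction \eqref{eq:lambda1} to get a geometrically summable bound on consecutive differences, and conclude Cauchy. This is the same mechanism as the cited lemma, written out explicitly rather than quoted; so the approaches coincide in substance. Your justification of $\phi^p(\eta^s(x))=\eta^s(x)$ could be said more cleanly---since $|p|=q$, one has $\phi^p(\phi^{qk}(x))=\phi^{q(k\pm1)}(x)$ and the shifted sequence has the same limit---but the conclusion is right.
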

\begin{proof}
(i) Since $(\phi^p(x), x) \in G_\phi^a$ and hence
$(\phi^p(x), x) \in G_\phi^s$ we have
$\phi^p(x) \in X^s(x)$.
By \cite[Lemma 5.3]{PutSp}, we know that 
the limit ${\displaystyle \lim_{k\to{\infty}}}\phi^{|p|k}(x),$ denoted by $\eta^s(x)$, 
exists in $X$
such that $\phi^p(\eta^s(x)) = \eta^s(x).$

(ii) is similarly shown to (i).
\end{proof}

\begin{lemma}\label{lem:6.2}
Assume that $(X,\phi) \ACOE (Y,\psi)$ via a homeomorphism $h:X\longrightarrow Y.$ 
  Let $x$ be an asymptotic  periodic point in $X.$
 \begin{enumerate}
\renewcommand{\theenumi}{\roman{enumi}}
\renewcommand{\labelenumi}{\textup{(\theenumi)}}
\item 
Both $h(x)$ and $h(\phi(x))$ are asymptotic periodic points in $(Y,\psi)$
suh that 
\begin{equation*}
\eta^s(h(\phi(x))) = \psi^{c_1(x)}(\eta^s(h(x))), \qquad
\eta^u(h(\phi(x))) = \psi^{c_1(x)}(\eta^u(h(x))).
\end{equation*}
\item
If $x$ is a $p$-periodic point, then 
both 
$\eta^s(h(x))$ and $\eta^u(h(x))$
are $c_1^p(x)$-periodic points, that is,
\begin{equation*}
\eta^s(h(x)) = \psi^{c_1^p(x)}(\eta^s(h(x))), \qquad
\eta^u(h(x)) = \psi^{c_1^p(x)}(\eta^u(h(x))).
\end{equation*}
\item If in particular  $x,$ 
$h(x)$ and $ h(\phi(x))$ are all periodic points, then we have 
\begin{equation*}
\eta^s(h(x)) =\eta^u(h(x)) = h(x), \qquad
\eta^s(h(\phi(x))) =\eta^u(h(\phi(x))) = h(\phi(x)) 
\end{equation*}
and hence
\begin{equation*}
h(\phi(x)) = \psi^{c_1(x)}(h(x)).
\end{equation*}
\end{enumerate}
\end{lemma}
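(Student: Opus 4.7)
The plan is to attack part (i) first, then derive (ii) by iteration and (iii) by specializing to the periodic case. The key input for (i) is condition (i) of Proposition \ref{prop:main1}, namely $(\psi^{c_1(x)}(h(x)), h(\phi(x))) \in G_\psi^a$, combined with the asymptotic periodicity of $h(x)$ already recorded in Lemma \ref{lem:asympointACOE}.

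For (i), I would fix an asymptotic period $p$ of $x$ and set $q_1 := c_h^p(x) = c_1^p(x) + d_1(\phi^p(x),x)$, so that $h(x)$ is asymptotic $q_1$-periodic. Applying $\psi^{c_1(x)}$ to the relation $(\psi^{q_1}(h(x)), h(x)) \in G_\psi^a$ shows that $\psi^{c_1(x)}(h(x))$ is also asymptotic $q_1$-periodic, and using transitivity of $G_\psi^a$ together with the displayed ACOE condition yields the same for $h(\phi(x))$. The crucial observation is then: if two points $y_1, y_2 \in Y$ are stably equivalent and share a common asymptotic period $q$, then $\eta^s(y_1) = \eta^s(y_2)$. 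Indeed, the sequences $\{\psi^{|q|k}(y_i)\}_k$ both converge to $\eta^s(y_i)$ by definition, and stable equivalence forces $d(\psi^{|q|k}(y_1), \psi^{|q|k}(y_2)) \to 0$ along this subsequence. Apply this with $y_1 = h(\phi(x))$, $y_2 = \psi^{c_1(x)}(h(x))$ and observe, by continuity of $\psi^{c_1(x)}$ and an index shift in the defining limit, that $\eta^s(\psi^{c_1(x)}(h(x))) = \psi^{c_1(x)}(\eta^s(h(x)))$. The $\eta^u$ identity is handled symmetrically using unstable equivalence and the inclusion $G_\psi^a \subset G_\psi^u$.

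For (ii), since a $p$-periodic $x$ is in particular asymptotic $p$-periodic, (i) applies. I iterate (i) along the orbit $x, \phi(x), \dots, \phi^{p-1}(x)$; the one-cocycle identity \eqref{eq:onecocycle} telescopes the exponents into $c_1^p(x)$, and $\phi^p(x)=x$ delivers $\eta^s(h(x)) = \psi^{c_1^p(x)}(\eta^s(h(x)))$, with the same argument for $\eta^u$. For (iii), the hypothesis that $h(x)$ and $h(\phi(x))$ are genuine periodic points lets the $\eta^s, \eta^u$ limits collapse to the points themselves: two periodic points that are asymptotically equivalent in a Smale space must coincide (the decay $d(\psi^n(y_1),\psi^n(y_2)) \to 0$ on finite orbits forces eventual equality, hence equality after applying $\psi^{-n}$), so the least positive asymptotic period of a periodic point equals its ordinary period and the defining limit is trivially the point itself. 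The final identity $h(\phi(x)) = \psi^{c_1(x)}(h(x))$ then drops out of (i) by substituting the collapsed values of $\eta^s$ on both sides.

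The main obstacle will be the step in (i) where $h(\phi(x))$ and $\psi^{c_1(x)}(h(x))$ must be exhibited with a \emph{common} explicit asymptotic period, so that a single value of $|q|$ can be used in the limits defining $\eta^s$ and $\eta^u$ on both sides; this relies on carefully exploiting the equivalence-relation structure of $G_\psi^a$ together with the period-independence of $\eta^s$ and $\eta^u$ established in Section 5.
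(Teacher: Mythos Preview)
Your proposal is correct and follows essentially the same approach as the paper: part (i) uses the ACOE relation $(\psi^{c_1(x)}(h(x)),h(\phi(x)))\in G_\psi^a$ together with the defining limits of $\eta^s,\eta^u$ (your explicit verification of a common asymptotic period for $h(\phi(x))$ and $\psi^{c_1(x)}(h(x))$ in fact fills a step the paper leaves implicit), and part (iii) collapses $\eta^s,\eta^u$ to the point itself under the periodicity hypothesis before invoking (i). The only minor deviation is in (ii), where the paper does a direct index-shift computation from the asymptotic $c_1^p(x)$-periodicity of $h(x)$ rather than iterating (i) along the orbit, but your telescoping argument via the cocycle identity is equally valid.
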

\begin{proof}
(i) 
Assume that
$x$ is an asymptotic  $p$-periodic point in $X.$
Put $c_h^p(x) = c_1^p(x) + d_1(\phi^p(x), x).$
By \cite[Lemma 4.4]{MaCJM},
$h(x)$ is an asymptotic $c_h^p(x)$-periodic point in $Y$ 
so that both the limits
$\eta^s(h(x)) = {\displaystyle \lim_{k\to{\infty}}}\psi^{|c_h^p(x)|k}(h(x))
$
and
$\eta^u(h(x)) = {\displaystyle \lim_{k\to{\infty}}}\psi^{-|c_h^p(x)|k}(h(x))
$
exist in $Y.$
By Proposition \ref{prop:main1} (i), we know 
$(\psi^{c_1(x)}(h(x)), h(\phi(x))) \in G_\psi^a,$
so that 
\begin{align*}
\eta^s(h(\phi(x))) 
=& \lim_{k\to{\infty}}\psi^{|c_h^p(x)|k}(h(\phi(x))) 
= \lim_{k\to{\infty}}\psi^{|c_h^p(x)|k}(\psi^{c_1(x)}(h(x))) \\
=& \psi^{c_1(x)}(\lim_{k\to{\infty}}\psi^{|c_h^p(x)|k}(h(x))) 
= \psi^{c_1(x)}(\eta^s(h(x))).
\end{align*}
Similarly we have
$\eta^u(h(\phi(x))) = \psi^{c_1(x)}(\eta^u(h(x))).$

(ii)
We further assume that  $x$ is a $p$-periodic point.
Since $\phi^p(x) = x$, we see that $c_h^p(x) = c_1^p(x)$, so that 
$h(x)$ is an asymptotic $c_1^p(x)$-periodic point in $Y.$
Hence we have 
\begin{align*}
\eta^s(h(x)) 
=& \lim_{k\to{\infty}}\psi^{|c_1^p(x)|k}(h(x)) 
= \lim_{k\to{\infty}}\psi^{|c_1^p(x)|k}(\psi^{c_1^p(x)}(h(x))) \\
=& \psi^{c_1^p(x)}(\lim_{k\to{\infty}}\psi^{|c_1^p(x)|k}(h(x))) 
= \psi^{c_1^p(x)}(\eta^s(h(x))).
\end{align*}
Similarly we have
$\eta^u(h(x)) = \psi^{c_1^p(x)}(\eta^u(h(x))).$

(iii)
Assume that $x$ is a $p$-periodic point and 
$h(x), h(\phi(x))$ are also  periodic points in $(Y,\psi).$
Suppose that $\psi^q(h(x)) = h(x)$ for some $q\in \N, q\ne 0$.
As in the proof of (ii),
$h(x)$ is an asymptotic $c_1^p(x)$-periodic point.
Hence we hvae
$$
\eta^s(h(x)) = \lim_{k\to{\infty}}\psi^{|c_1^p(x)|k}(h(x))
= \lim_{m\to{\infty}}\psi^{|c_1^p(x)|mq}(h(x))
= h(x),
$$
and similarly 
$\eta^s(h(\phi(x)))= h(\phi(x)).$
By (i), we have
$h(\phi(x)) =\psi^{c_1(x)}(h(x)).  
$
\end{proof}
A homeomorphism $h:X\longrightarrow Y$ is said to be 
periodic point preserving if $h(x)\in Y$ is a periodic point of $(Y,\psi)$
for any periodic point $x \in X$ of $(X,\phi)$.
\begin{lemma}\label{lem:6.3}
Assume that Smale spaces
$(X,\phi)$ and $(Y,\psi)$ are irreducible.
Let  $h:X\longrightarrow Y$ 
be  a homeomorphism that gives rise to an asymptotic continuous orbit equivalence between $(X,\phi)$ and $(Y,\psi).$
Suppose that $h$ is periodic point preserving.
Then we have
\begin{equation}
h(\phi(x)) = \psi^{c_1(x)}(h(x)) \quad \text{ for all } x \in X. \label{eq:hphixallx}
\end{equation}
\end{lemma}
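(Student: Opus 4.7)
The plan is to prove the equation first on the dense set of periodic points using Lemma \ref{lem:6.2}(iii), and then extend it to all of $X$ by continuity.

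First, I would verify the equation on periodic points. Suppose $x \in \Per_\phi(X)$, so $x$ is $p$-periodic for some $p \in \N$. Then $\phi(x)$ is also $p$-periodic. Since $h$ is periodic point preserving, both $h(x)$ and $h(\phi(x))$ lie in $\Per_\psi(Y)$. All three hypotheses of Lemma \ref{lem:6.2}(iii) are therefore satisfied, and we conclude
\begin{equation*}
h(\phi(x)) = \psi^{c_1(x)}(h(x)) \qquad \text{for every } x \in \Per_\phi(X).
\end{equation*}

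Next, I would invoke the classical fact that for an irreducible Smale space $(X,\phi)$ the set $\Per_\phi(X)$ is dense in $X$ (this is part of Smale's spectral decomposition theorem and is used throughout the Ruelle--Putnam theory; see for instance \cite{Ruelle1} or \cite{Putnam1}). Thus the equation \eqref{eq:hphixallx} holds on a dense subset of $X$.

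Finally, I would argue that both sides of \eqref{eq:hphixallx} are continuous functions of $x$. The left-hand side $x \mapsto h(\phi(x))$ is manifestly continuous as a composition of continuous maps. For the right-hand side, the function $c_1 : X \longrightarrow \Z$ is continuous with values in the discrete group $\Z$, hence locally constant: around any point $x_0$ there is an open neighborhood $U$ on which $c_1 \equiv c_1(x_0)$, and there the map $x \mapsto \psi^{c_1(x)}(h(x)) = \psi^{c_1(x_0)}(h(x))$ is continuous. Since two continuous maps $X \longrightarrow Y$ that agree on a dense subset of $X$ must coincide everywhere, the identity \eqref{eq:hphixallx} extends from $\Per_\phi(X)$ to all of $X$, completing the proof.

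There is no substantial obstacle here; the only nontrivial input outside what has already been established in the paper is the density of periodic points in an irreducible Smale space, which is standard.
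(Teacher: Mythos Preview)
Your proof is correct and follows essentially the same approach as the paper: apply Lemma~\ref{lem:6.2}(iii) on periodic points, then extend by density of $\Per_\phi(X)$ in the irreducible Smale space $X$ together with continuity of $c_1$. The only difference is that you spell out the local constancy of $c_1$ to justify continuity of the right-hand side, which the paper leaves implicit.
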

\begin{proof}
By Lemma \ref{lem:6.2}, 
the equality \eqref{eq:hphixallx} holds  for all periodic points $x\in X.$
Now $X$ is irreducible, so that  the set of periodic points 
is dense in $X$.
As  
 $c_1:X\longrightarrow \Z$ is continuous,
we have the equality \eqref{eq:hphixallx}
for all points in $X.$
\end{proof}
We have the following theorem.
\begin{theorem}\label{thm:flipacoe}
Let $(X,\phi)$ and $(Y,\psi)$ be irreducible Smale spaces.
The following conditions are equivalent:
\begin{enumerate}
\renewcommand{\theenumi}{\roman{enumi}}
\renewcommand{\labelenumi}{\textup{(\theenumi)}}
\item
$(X,\phi)$ and $(Y,\psi)$ are  flip  conjugate.
\item 
There exists a periodic point preserving homeomorphism
 $h:X\longrightarrow Y$ 
 that gives rise to an asymptotic continuous orbit equivalence between $(X,\phi)$ and $(Y,\psi).$
\end{enumerate}
\end{theorem}
\begin{proof}
The implication (i) $\Longrightarrow$ (ii) is obvious.
It remains to show that 
the implication (ii) $\Longrightarrow$ (i) holds.
Let  $h:X\longrightarrow Y$ be a periodic point preserving homeomorphism
  that gives rise to an asymptotic continuous orbit equivalence between $(X,\phi)$ and $(Y,\psi).$
By Lemma \ref{lem:6.3}, the equality  \eqref{eq:hphixallx}
hold for all points in $X.$
Since the cocycle function $c_1:X\longrightarrow \Z$ is continuous,
the equality  \eqref{eq:hphixallx} implies that 
the homeomorphisms
$\phi$ on $X$ and $\psi$ on $Y$
 are continuously orbit equivalent in the sense of Boyle-Tomiyama \cite{BT}.
By Boyle-Tomiyama's theorem  \cite[Theorem 3.2]{BT},
we conclude that $(X,\phi)$ and $(Y,\psi)$ are flip conjugate.
\end{proof}
There is no known examples of Smale spaces such that they are 
 asymptotically continuous orbit equivalent  but not flip conjugate.

\section{Asymptotic flip conjugacy and Ruelle algebras}
In this section, we introduce a notion of asymptotic flip conjugacy,
 that is an asymptotic version of flip conjugacy. 
\begin{definition}\label{def:StUniACOE}
Two Smale spaces 
$(X,\phi)$ and $(Y,\phi)$ are said to be {\it asymptotically flip conjugate}\/ 
if 
$(X,\phi)$ is asymptotically topologically conjugate to $(Y,\phi)$
or to $(Y,\psi)$.
\end{definition}
Since asymptotic topological conjugacy is equivalent to asymptotic conjugacy,
we have 
\begin{lemma}\label{lem:flipaflip}
Two Smale spaces $(X,\phi)$ and $(Y,\psi)$ 
are asymptotically flip conjugate if and only if  
they are asymptotically continuous orbit equivalent having
 their cocycle functions
$c_1, c_2$ and $d_1, d_2$ such as 

(1)
$
c_1\equiv 1,\, \,
c_2\equiv 1  \, \, \text{ and }
\, \,
d_1\equiv 0,\, \,
d_2\equiv 0, \, \,
$ or

(2) 
$
c_1\equiv -1,\, \,
c_2\equiv -1  \, \, \text{ and }
\, \, 
d_1\equiv 0, \, \,
d_2\equiv 0.
$
\end{lemma}
Let $(X,\phi)$ be an irreducible Smale space.
Now let us recall that the Ruelle algebra $\R_\phi^a$
 is defined by the $C^*$-algebra 
$C^*(G_\phi^a\rtimes\Z)$ of the amenable \'etale groupoid 
$G_\phi^a\rtimes\Z.$
As
$C^*(G_\phi^a\rtimes\Z)$
is isomorphic to the crossed product 
$C^*$-algebra
$C^*(G_\phi^a)\rtimes\Z$, 
it has a canonical action of $\T$ called the dual action written 
$\rho^\phi_t, t \in \T.$
Since the groupoid 
$G_\phi^a\rtimes\Z$ is essentially principal (\cite[Lemma 5.2, Lemma 5.3]{MaCJM}),
the abelian  $C^*$-algebra $C((G_\phi^a\rtimes\Z)^{(0)})$
of the unit space $(G_\phi^a\rtimes\Z)^{(0)}$ of the groupoid 
$G_\phi^a\rtimes\Z$ is a maximal abelian $C^*$-subalgebra of 
$C^*(G_\phi^a\rtimes\Z).$
As the space $X$ is identified with $(G_\phi^a\rtimes\Z)^{(0)},$
we may regard the abelian $C^*$-algebra $C(X)$ as 
the maximal abelian $C^*$-subalgebra $C^*((G_\phi^a\rtimes\Z)^{(0)})$ of $\R_\phi^a.$
We note the following proposition.
\begin{proposition}\label{prop:Ruelleflip}
The isomorphism
$(x,n,z) \in G_\phi^a\rtimes\Z \longrightarrow 
(x,-n, z) \in G_{\phi^{-1}}^a\rtimes\Z
$
of \'etale groupoids between
$G_\phi^a\rtimes\Z $ and $G_{\phi^{-1}}^a\rtimes\Z $
induces an isomorphism 
$\Phi:\R_\phi^a \longrightarrow\R_{\phi^{-1}}^a$
of $C^*$-algebras such that 
$\Phi(C(X)) = C(X)
$
and
$\Phi \circ \rho^\phi_t =\rho^{\phi^{-1}}_{-t}\circ \Phi$ for $t \in \T.$
\end{proposition}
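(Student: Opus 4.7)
The key observation is that $G_\phi^a$ and $G_{\phi^{-1}}^a$ coincide as étale groupoids on $X$. Since the local stable set of $\phi^{-1}$ at a point is the local unstable set of $\phi$, one has $G_{\phi^{-1}}^{s,0} = G_\phi^{u,0}$ and $G_{\phi^{-1}}^{u,0} = G_\phi^{s,0}$; applying $(\phi\times\phi)^{\pm n}$ in the definitions gives $G_{\phi^{-1}}^{s,n} = G_\phi^{u,n}$ and $G_{\phi^{-1}}^{u,n} = G_\phi^{s,n}$, hence $G_{\phi^{-1}}^{a,n} = G_\phi^{a,n}$ for all $n\in\Zp$, and passing to the inductive limit gives $G_{\phi^{-1}}^a = G_\phi^a$ with the same topology. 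This is the one step requiring genuine verification.

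Next I would show that
\[
\varphi\colon (x,n,z) \in G_\phi^a\rtimes\Z \longmapsto (x,-n,z) \in G_{\phi^{-1}}^a\rtimes\Z
\]
is an isomorphism of étale groupoids. Well-definedness follows because $(\phi^n(x),z)\in G_\phi^a$ is equivalent to $((\phi^{-1})^{-n}(x),z)\in G_{\phi^{-1}}^a$. The groupoid operations are preserved by the trivial computation $\varphi((x,n,z)(z,m,w)) = (x,-n-m,w) = \varphi(x,n,z)\varphi(z,m,w)$, and $\varphi$ fixes the unit space $X$ pointwise. Continuity is immediate from \eqref{eq:gamma}: composing with the charts $\gamma_\phi$ and $\gamma_{\phi^{-1}}$, the map $\gamma_{\phi^{-1}}\circ\varphi\circ\gamma_\phi^{-1}$ on $G_\phi^a\times\Z$ reduces to $((x,y),n)\mapsto ((x,y),-n)$, which is a homeomorphism. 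Since $\varphi^{-1}=\varphi$, the same argument shows continuity of the inverse.

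By the standard correspondence between isomorphisms of étale groupoids and $*$-isomorphisms of their groupoid $C^*$-algebras, $\varphi$ induces a $*$-isomorphism $\Phi\colon \R_\phi^a \longrightarrow \R_{\phi^{-1}}^a$ acting on $f\in C_c(G_\phi^a\rtimes\Z)$ by $(\Phi f)(x,m,z) = f(x,-m,z)$. Since $\varphi$ is the identity on the unit space $(G_\phi^a\rtimes\Z)^{(0)}\cong X$, $\Phi$ restricts to the identity on the canonical maximal abelian subalgebra $C(X)$, so $\Phi(C(X)) = C(X)$. For the gauge actions, $\rho^\phi_t$ multiplies the function at grading $n$ by the character $\chi_t(n)$ of $\Z$; the direct calculation
\[
(\Phi\circ \rho^\phi_t(f))(x,m,z) = \chi_t(-m)\,f(x,-m,z) = \chi_{-t}(m)\,(\Phi f)(x,m,z) = (\rho^{\phi^{-1}}_{-t} \circ \Phi(f))(x,m,z)
\]
yields the required intertwining $\Phi \circ \rho^\phi_t = \rho^{\phi^{-1}}_{-t} \circ \Phi$. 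The only nontrivial input is the first step; everything else is formal, so there is no significant obstacle once the symmetry of the asymptotic equivalence relation in $\phi\leftrightarrow\phi^{-1}$ is in hand.
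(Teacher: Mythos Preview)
Your proof is correct and proceeds along essentially the same overall arc as the paper's, but the argument for the $C^*$-algebraic conclusion is organized differently. The paper does not compute $\Phi$ explicitly on $C_c(G_\phi^a\rtimes\Z)$; instead it invokes the general machinery of \cite[Proposition~5.6]{MaCJM}, representing the Ruelle algebras on Hilbert $C^*$-modules $l^2(G_\phi^a\rtimes\Z)$ and introducing the cocycle homomorphisms $f(x,n,z)=n$, $g(x,n,z)=-n$ so that $f=g\circ\varphi$, from which the cited proposition yields $\Phi$ together with $\Phi\circ\Ad(U_t(f))=\Ad(U_t(g))\circ\Phi$; the gauge relation then follows by identifying $\Ad(U_t(f))=\rho^\phi_t$ and $\Ad(U_t(g))=\rho^{\phi^{-1}}_{-t}$. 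Your route is more elementary and self-contained: you write down $\Phi$ directly as precomposition by $\varphi^{-1}$ on $C_c$ and verify the intertwining with the dual action by a one-line character computation. What your approach buys is transparency (no appeal to the external proposition), while the paper's approach situates the result as an instance of a general correspondence already established in \cite{MaCJM}. Your extra verification of the chart computation $\gamma_{\phi^{-1}}\circ\varphi\circ\gamma_\phi^{-1}\colon((x,y),n)\mapsto((x,y),-n)$ is a nice touch that the paper leaves implicit in the statement.
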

\begin{proof}
As in \cite[Section 5]{MaCJM}, 
let us represent the groupoid $C^*$-algebras 
$\R_\phi^a$ and $\R_{\phi^{-1}}^a$
on the Hilbert $C^*$-right $C(X)$-modules
$l^2(G_\phi^a\rtimes\Z)$ and $l^2(G_{\phi^{-1}}^a\rtimes\Z )$, respectively.  
Let us denote by
$\varphi: G_\phi^a\rtimes\Z \longrightarrow  G_{\phi^{-1}}^a\rtimes\Z$
the isomorphism of the \'etale groupoids defined by
$\varphi(x,n,z) = (x,-n,z)$ for $(x,n,z) \in G_\phi^a\rtimes\Z.$
Define the homomorphisms
$f:G_\phi^a\rtimes\Z \longrightarrow \Z$
and
$g:G_{\phi^{-1}}^a\rtimes\Z\longrightarrow \Z$ of \'etale groupoids 
by
$f(x,n,z) = n$ and $g(x,n,z) = -n$ respectively,
so that we have
$f = g\circ \varphi.$
By \cite[Proposition 5.6]{MaCJM},
there exists
an isomorphism 
$\Phi:\R_\phi^a \longrightarrow\R_{\phi^{-1}}^a$
of $C^*$-algebras such that 
$\Phi(C(X)) = C(X)
$
and
$\Phi \circ \Ad(U_t(f)) = \Ad(U_t(g)) \circ \Phi$ for $t \in \T,$
where
$U_t(f)$ is a unitary on $l^2(G_\phi^a\rtimes\Z)$
defined by
\begin{equation*}
[U_t(f)\xi](x,n,z) 
= 
\exp(2\pi\sqrt{-1} f(x,n,z)t )\xi(x,n,z),\qquad
\xi \in l^2(G_\phi^a\rtimes\Z),
\end{equation*}
and 
$U_t(g)$ is similarly defined.
It is easy to see that 
 $\Ad(U_t(f)) =\rho^\phi_t$
and
$\Ad(U_t(g)) =\rho^{\phi^{-1}}_{-t}$
 for $t \in \T.$
Therefore we get 
$\Phi \circ \rho^\phi_t =\rho^{\phi^{-1}}_{-t}\circ \Phi$ for $t \in \T.$
\end{proof}

We thus obtain the following characterization of asymptotic flip conjugacy 
of Smale spaces.
\begin{theorem}[cf. {\cite[Lemma 6.2 and Theorem 6.4]{MaCJM}}]\label{thm:asmpflip}
Let
$(X,\phi)$ and $(Y,\psi)$ be irreducible Smale spaces. 
Then the following assertions are equivalent for $\varepsilon =\pm 1.$
\begin{enumerate}
\renewcommand{\theenumi}{\roman{enumi}}
\renewcommand{\labelenumi}{\textup{(\theenumi)}}
\item
$(X,\phi)$ and 
$(Y,\psi)$ are 
asymptotically  flip conjugate.
\item
There exists an isomorphism
$\varphi: G_\phi^a\rtimes\Z\longrightarrow G_\psi^a\rtimes\Z$
 of \'etale groupoids such that 
$d_\psi \circ \varphi = \varepsilon d_\phi.$ 
\item
There exists an isomorphism
$\varphi: G_\phi^a\rtimes\Z\longrightarrow G_\psi^a\rtimes\Z$
 of \'etale groupoids such that 
$\varphi(G_\phi^a) = G_\psi^a.$
\item
There exists an isomorphism 
$\Phi:\R_\phi^a \longrightarrow\R_\psi^a$
of $C^*$-algebras such that 
$\Phi(C(X)) = C(Y)
$
and
$\Phi \circ \rho^\phi_t =\rho^\psi_{\varepsilon t}\circ \Phi$ for $t \in \T.$
\item
There exists an isomorphism 
$\Phi:\R_\phi^a\longrightarrow\R_\psi^a$
of $C^*$-algebras such that 
$\Phi(C(X)) = C(Y)
$
and
$\Phi(C^*(G_\phi^a)) = C^*(G_\psi^a).$
\end{enumerate}  
\end{theorem}
\begin{proof}

(i) $\Longrightarrow$ (ii): 
If $(X,\phi)$ is asymptotically conjugate to $(Y,\psi),$
then 
there exists an isomorphism
$\varphi: G_\phi^a\rtimes\Z\longrightarrow G_\psi^a\rtimes\Z$
 of \'etale groupoids such that 
$d_\psi \circ \varphi = d_\phi$ 
by \cite[Theorem 6.4]{MaCJM}.
If $(X,\phi)$
is asymptotically  conjugate to $(Y,\psi^{-1}),$
then one may take the above isomorphism
$\varphi: G_\phi^a\rtimes\Z\longrightarrow G_\psi^a\rtimes\Z$
 of \'etale groupoids such that 
$d_\psi \circ \varphi = -d_\phi.$ 

(ii) $\Longrightarrow$ (i):
Assume the assertion (ii) for $\varepsilon = 1$.
Then by \cite[Theorem 6.4]{MaCJM}, 
we know that  $(X,\phi)$ is asymptotically conjugate to $(Y,\psi).$
Assume next the assertion (ii) for $\varepsilon = -1$.
Then  $(X,\phi)$ is asymptotically conjugate to $(Y,\psi^{-1}).$

(i) $\Longleftrightarrow$ (iv): 
The assertions follow from \cite[Theorem 6.4]{MaCJM} 
together with Proposition \ref{prop:Ruelleflip}.

(iii) $\Longleftrightarrow$ (v): 
The assertions follow from \cite[Lemma 6.2]{MaCJM}. 

(ii) $\Longrightarrow$ (iii):
Suppose that 
there exists an isomorphism
$\varphi: G_\phi^a\rtimes\Z\longrightarrow G_\psi^a\rtimes\Z$
 of \'etale groupoids such that 
$d_\psi \circ \varphi = \varepsilon d_\phi.$ 
Hence the correspondence 
$
\varphi:
(x,n,z) \in G_\phi^a\rtimes\Z\longrightarrow 
(h(x), \varepsilon n, h(z)) \in G_\psi^a\rtimes\Z
$ 
gives rise to an isomorphism of \'etale groupoids.
We thus have 
$d_1 \equiv 0$ and $d_2\equiv 0$
so that $\varphi(G_\phi^a) = G_\psi^a.$

(iii) $\Longrightarrow$ (i):
Suppose that 
an isomorphism
$\varphi: G_\phi^a\rtimes\Z\longrightarrow G_\psi^a\rtimes\Z$
 of \'etale groupoids satisfies 
$\varphi(G_\phi^a) = G_\psi^a.$
By \cite[Theorem 3.4]{MaCJM},
we know that the restriction of $\varphi$ 
to its unit space $(G_\phi^a\rtimes\Z)^{(0)}$
gives rise to a homeomorphism $h : X\longrightarrow Y$ 
that gives rise to an asymptotic continuous orbit equivalence.
Let $c_1, c_2$ and $ d_1, d_2$ be their cocycle functions.
 The condition 
$\varphi(G_\phi^a) = G_\psi^a$
implies that $d_1 \equiv 0.$
By Proposition \ref{prop:main1} (1)
we have 
$c_1(x) = c_1(z)$
for all $(x,z) \in G_\phi^a.$ 
Hence the function $c_1: X\longrightarrow \Z$ 
is asymptotically invariant
so that it is a constant integer by {\cite[7.16 (b)]{Ruelle2}}, denoted by $K_1$.
By Proposition \ref{prop:main1} (vi),
we see that 
$c_2(y) K_1 =1$ for all $y \in Y$
so that 
$c_2(y)$ is also a constant integer, denoted by $K_2$,
such that 
$K_1\cdot K_2 =1.$
By Proposition \ref{prop:main1} (viii),
we have 
$c_1^{d_2(y,w)}(h^{-1}(y)) =0$ for all $(y, w) \in G_\psi^a,$
because $d_1(x,z)=0$ for all $(x,z) \in G_\phi^a.$
As
$c_1^{d_2(y,w)}(h^{-1}(y)) =d_2(y,w)K_1,$
we obtain that   
$d_2(y,w)=0$ for all $(y,w) \in G_\psi^a.$
This shows that the following two cases occur:

(1)
$
c_1\equiv 1,\, \,
c_2\equiv 1  \, \, \text{ and }
\, \,
d_1\equiv 0,\, \,
d_2\equiv 0, \, \,
$ or

(2) 
$
c_1\equiv -1,\, \,
c_2\equiv -1  \, \, \text{ and }
\, \, 
d_1\equiv 0, \, \,
d_2\equiv 0.
$

By Lemma \ref{lem:flipaflip},
we know that 
$(X,\phi)$ and 
$(Y,\psi^{-1})$ are 
asymptotically flip conjugate.
\end{proof}

Topological Markov shifts, often called shifts of finite type or SFT for brevity, 
form a basic class of Smale spaces.
Furthermore
it is well-known that 
any Smale space is realized as a finite factor of a topological Markov shift 
by its Markov partitions (\cite[Theorem 7.6]{Ruelle1}).
Let $A=[A(i,j)]_{i,j=1}^N$
be an $N\times N$ matrix with entries in $\{0,1\}.$
The two-sided shift space
$\bXA$ is defined by
\begin{equation*}
\bXA = \{ (x_i)_{i \in \Z} \mid A(x_i, x_{i+1}) =1 \text{ for all } i \in \Z\}.
\end{equation*}
 For a given real number $\lambda_0$ with $0<\lambda_0 <1,$
$\bXA$ becomes a compact metric space defined by the metric
\begin{equation*}
d((x_i)_{i\in \Z},(z_i)_{i\in \Z}) = \lambda_0^k
\quad
\text{ where } 
\quad
k = \inf\{|i| :  x_i \ne z_i \}
\end{equation*}
for $(x_i)_{i\in \Z}, \, (z_i)_{i\in \Z} \in \bar{X}_A$
with $(x_i)_{i\in \Z} \ne (z_i)_{i\in \Z}.$
The shift map
$\bsA:\bXA\longrightarrow \bXA$
is a homeomorphism defined by $\bsA((x_i)_{i\in \Z}) =(x_{i+1})_{i\in \Z}.$
The topological dynamical system
$(\bXA,\bsA)$ is called the (two-sided) topological Markov shift, or a shift of finite type (SFT),
defined by the matrix $A$.  
Let $A$ and $B$ be an $N\times N$ matrix with entries in $\{0,1\}$
and
an $M\times M$ matrix with entries in $\{0,1\},$
respectively.
We assume that both $A$ and $B$ are irreducible and not any paermutation.
We then see  the following proposition:
\begin{proposition}\label{prop:flipSFT}
Two-sided topological Markov shifts 
$(\bar{X}_A, \bar{\sigma}_A)$ and 
$(\bar{X}_B, \bar{\sigma}_B)$ 
are flip conjugate
if and only if
they are
asymptotically  flip conjugate.
\end{proposition}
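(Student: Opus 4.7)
The ``only if'' direction is immediate from Proposition \ref{prop:flipaflip}, which applies to arbitrary Smale spaces. For the converse, my plan is to exploit the zero-dimensional symbolic structure of SFTs to construct, from any asymptotic flip conjugacy, an honest topological conjugacy (possibly to the time-reversed shift), so that the two SFTs turn out flip conjugate.

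Assume $(\bar X_A, \bar\sigma_A)$ and $(\bar X_B, \bar\sigma_B)$ are asymptotically flip conjugate. By Definition \ref{def:StUniACOE} it suffices to treat $(\bar X_A,\bar\sigma_A){\underset{\operatorname{a}}{\cong}}(\bar X_B,\bar\sigma_B)$; the other case $(\bar X_A,\bar\sigma_A){\underset{\operatorname{a}}{\cong}}(\bar X_B,\bar\sigma_B^{-1})$ is handled by the same argument with $\bar\sigma_B$ replaced by $\bar\sigma_B^{-1}$, and its conclusion is a flip conjugacy between the original pair. So fix a homeomorphism $h: \bar X_A \to \bar X_B$ and cocycles $c_1\equiv 1$, $c_2\equiv 1$, $d_1\equiv 0$, $d_2\equiv 0$ satisfying Definition \ref{def:acoe}. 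The continuous map
$$\xi_1: x\mapsto(\bar\sigma_B(h(x)),h(\bar\sigma_A(x)))$$
takes values in $G_B^a=\bigcup_n G_B^{a,n}$ with the inductive limit topology, and by compactness of $\bar X_A$ its image lies in some $G_B^{a,K}$ for a constant $K\in\N$. Unwinding the SFT meaning of $G_B^{a,K}$ this says: $h(\bar\sigma_A(x))_j=h(x)_{j+1}$ for all $x\in\bar X_A$ and all $|j|\ge K$.

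Given this bounded discrepancy, I define $\tilde h: \bar X_A\to\bar X_B$ by $\tilde h(x)_i:=h(\bar\sigma_A^i(x))_{-K}$ for $i\in\Z$. It is continuous, and $\bar\sigma_B\circ\tilde h=\tilde h\circ\bar\sigma_A$ is immediate from the definition. Applying the displayed identity at $j=-K$ to $\bar\sigma_A^i(x)$ gives $\tilde h(x)_{i+1}=h(\bar\sigma_A^{i+1}(x))_{-K}=h(\bar\sigma_A^i(x))_{-K+1}$, so $\tilde h(x)_i$ and $\tilde h(x)_{i+1}$ are consecutive admissible symbols in the sequence $h(\bar\sigma_A^i(x))\in\bar X_B$; hence $\tilde h(x)\in\bar X_B$. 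To get that $\tilde h$ is a homeomorphism I would run the mirror construction starting from $h^{-1}$ (which, by Lemma \ref{lem:5.1} applied to the inverse asymptotic COE, also has trivial cocycles) to produce a continuous shift-equivariant map $\tilde g:\bar X_B\to\bar X_A$ and verify $\tilde g=\tilde h^{-1}$. Once $\tilde h$ is a homeomorphism intertwining $\bar\sigma_A$ with $\bar\sigma_B$, the two SFTs are topologically conjugate and the conclusion follows (alternatively, since $\tilde h$ is then periodic-point preserving, Theorem \ref{thm:flipacoe} closes the argument).

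The hard part will be this bijectivity check. The step-$1$ bound only controls commutation of $h$ with the shifts outside a fixed window $|j|\ge K$; iterating to $\bar\sigma^n$ inflates the bad window via the $n$-dependent constants $K_n$ of Lemma \ref{lem:3.5.1}, so bijectivity of $\tilde h$ does not transfer directly from that of $h$. The argument must instead exploit the coordinate-recovery identity $h(\bar\sigma_A^{i+n}(x))_{-K}=h(\bar\sigma_A^i(x))_{-K+n}$, which is valid whenever the sliding window stays outside the bad range, to reconstruct enough coordinates of $h(x)$ from the sequence $(\tilde h(x)_i)_{i\in\Z}$ to force $x$ to be uniquely determined. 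Combined with the symmetric construction $\tilde g$ (with its own bound $K'$), this should identify $\tilde g\circ\tilde h=\id_{\bar X_A}$ and $\tilde h\circ\tilde g=\id_{\bar X_B}$ at each coordinate.
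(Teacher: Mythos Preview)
The paper's proof is a one-line appeal to \cite{MaPre2017a}, which shows that asymptotic conjugacy between irreducible two-sided topological Markov shifts already forces topological conjugacy; the flip case then follows by applying this to $(\bar X_B,\bar\sigma_B^{-1})$. Your route is different: you try to reprove that cited implication directly by building a sliding block code $\tilde h(x)_i=h(\bar\sigma_A^i(x))_{-K}$ out of the asymptotic conjugacy $h$.

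The construction of $\tilde h$ as a continuous shift-commuting map into $\bar X_B$ is correct, and your check that consecutive symbols are admissible is fine. The genuine gap is the bijectivity. Your coordinate-recovery identity
\[
h(\bar\sigma_A^{i+n}(x))_{-K}=h(\bar\sigma_A^i(x))_{-K+n}
\]
requires each index $-K,-K+1,\dots,-K+n-1$ to lie outside $(-K,K)$, which forces $n\le 1$. So iterating it from coordinate $-K$ only moves \emph{left}: you recover $h(x)_m$ for $m\le -K+1$, and nothing more. The right tail of $h(x)$ is simply not visible from $\tilde h(x)$ via this identity, and the mirror construction $\tilde g$ built from $h^{-1}$ has the symmetric defect. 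Your last paragraph acknowledges the difficulty but gives no mechanism forcing $\tilde g\circ\tilde h$ to be the identity rather than merely a shift-commuting self-map of $\bar X_A$; in particular, knowing $h$ on a left half-line does not determine $x$, since $h$ is only a homeomorphism of the full two-sided sequence. Filling this gap is precisely the content of \cite{MaPre2017a}, so as it stands your argument reduces the proposition to the same external result the paper invokes, without proving it. If you want a self-contained proof you must bring in more of the asymptotic-conjugacy data---for instance the continuity of $\eta_1:G_A^a\to G_B^a$ and $\eta_2:G_B^a\to G_A^a$ (conditions (iii),(iv) of Definition~\ref{def:acoe})---to control both tails simultaneously, or supplement injectivity with a separate surjectivity argument (e.g.\ via matching periodic-point counts).
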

\begin{proof}
By \cite[Proposition 11.1]{MaCJM}, 
flip conjugacy yields asymptotic flip conjugacy. 
Conversely suppose that 
$(\bar{X}_A, \bar{\sigma}_A)$ and 
$(\bar{X}_B, \bar{\sigma}_B)$ 
are asymptotically flip conjugate.
If $(\bar{X}_A, \bar{\sigma}_A)$ and 
$(\bar{X}_B, \bar{\sigma}_B)$ 
are asymptotically conjugate, 
then  they are topologically conjugate by \cite{MaPre2017a}.
If    $(\bar{X}_A, \bar{\sigma}_A)$ and 
$(\bar{X}_B, \bar{\sigma}_B^{-1})$
are asymptotically conjugate, 
then 
  $(\bar{X}_A, \bar{\sigma}_A)$ and 
$(\bar{X}_B, \bar{\sigma}_B^{-1})$
are topologically conjugate, so that 
$(\bar{X}_A, \bar{\sigma}_A)$ and 
$(\bar{X}_B, \bar{\sigma}_B)$ 
are flip conjugate.
\end{proof}
Let $A, B$ be irreducible non-permutation matrices with entries in $\{0,1\}.$
Let us denote by 
$G_A^a, G_B^a$
 the asymptotic \'etale groupoids
$G_{{\bar\sigma}_A}^a,G_{{\bar\sigma}_B}^a,$
respectively.
Let us also denote by $\R_A^a, \R_B^a$ the asymptotic Ruelle algebras 
$\R_{\bar{\sigma}_A}^a (=C^*(G_{A}^a\rtimes\Z)), 
\R_{\bar{\sigma}_B}^a (=C^*(G_{B}^a\rtimes\Z))
$ 
for the 
topological Markov shifts
$(\bar{X}_A, {\bar\sigma}_A), (\bar{X}_B, {\bar\sigma}_B),$
respectively.
We set their dual actions 
$\rho^A := \rho^{{\bar\sigma}_A}, \, \rho^B := \rho^{{\bar\sigma}_B},
$
respectively.
By Theorem \ref{thm:asmpflip} 
together with 
Proposition \ref{prop:flipSFT},
we may characterize
the flip conjugacy class of irreducible two-sided  topological Markov shifts in terms of
$C^*$-algebras in the following way. 
\begin{corollary}\label{cor:flipSFT}
Keep the above notation.
The following conditions are equivalent: 
\begin{enumerate}
\renewcommand{\theenumi}{\roman{enumi}}
\renewcommand{\labelenumi}{\textup{(\theenumi)}}
\item
$(\bar{X}_A, {\bar\sigma}_A)$ and $ (\bar{X}_B, {\bar\sigma}_B)$
are  flip conjugate.
\item
There exists an isomorphism 
$\Phi:\R_A^a\longrightarrow\R_B^a$
of $C^*$-algebras such that 
$\Phi(C(\bar{X}_A)) = C(\bar{X}_B)
$
and
$\Phi \circ \rho^A_t =\rho^B_{\varepsilon t}\circ \Phi$ for $t \in \T,$
where 
$\varepsilon = 1$ or $-1.$ 
\item
There exists an isomorphism 
$\Phi:\R_A^a\longrightarrow\R_B^a$
of $C^*$-algebras such that 
$\Phi(C(\bar{X}_A)) = C(\bar{X}_B)
$
and
$\Phi(C^*(G_A^a)) = C^*(G_B^a).$
\end{enumerate}
\end{corollary}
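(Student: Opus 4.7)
The plan is to assemble the corollary directly from the two preceding results in the paper, namely Proposition \ref{prop:flipSFT} and Theorem \ref{thm:asmpflip}. Since the Smale space structure on $(\bar{X}_A,\bar{\sigma}_A)$ and $(\bar{X}_B,\bar{\sigma}_B)$ is standard for SFTs, and irreducibility of $A,B$ (together with the non-permutation hypothesis, which rules out the degenerate case where the shift reduces to a finite set of periodic orbits) guarantees that these SFTs are irreducible Smale spaces to which all the earlier machinery applies, no new dynamical content will need to be proved.

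First I would observe that condition (i) of the corollary is equivalent to asymptotic flip conjugacy of the SFTs: this is exactly the content of Proposition \ref{prop:flipSFT}, which uses the fact that asymptotic conjugacy of irreducible two-sided topological Markov shifts already implies topological conjugacy (from \cite{MaPre2017a}). Thus the corollary reduces to proving the equivalence of asymptotic flip conjugacy with conditions (ii) and (iii).

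Next I would invoke Theorem \ref{thm:asmpflip}, specialized to the Smale spaces $(X,\phi)=(\bar X_A,\bar\sigma_A)$ and $(Y,\psi)=(\bar X_B,\bar\sigma_B)$. Condition (iv) of that theorem reads: there exists an isomorphism $\Phi:\R_\phi^a\to\R_\psi^a$ with $\Phi(C(X))=C(Y)$ and $\Phi\circ\rho^\phi_t=\rho^\psi_{\varepsilon t}\circ\Phi$ for some $\varepsilon=\pm 1$; after the translation $\rho^\phi=\rho^A$, $\rho^\psi=\rho^B$, this is precisely condition (ii) of the corollary. Condition (v) of the theorem is exactly condition (iii) of the corollary, since $C^*(G_\phi^a)=C^*(G_A^a)$ and $C^*(G_\psi^a)=C^*(G_B^a)$ by definition. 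So the chain (i) $\Leftrightarrow$ asymptotically flip conjugate $\Leftrightarrow$ (ii) $\Leftrightarrow$ (iii) follows immediately.

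There is essentially no technical obstacle; the work is really only bookkeeping, namely verifying that our two SFTs satisfy the irreducibility hypothesis required by both Proposition \ref{prop:flipSFT} and Theorem \ref{thm:asmpflip}, and that the dual actions $\rho^A, \rho^B$ coincide with the $\rho^\phi, \rho^\psi$ of the theorem. The only conceptual point worth flagging is why the non-permutation hypothesis is needed: it ensures that the SFT is not a disjoint union of cyclic permutations (in which case the asymptotic equivalence relation is trivial and the statement would be vacuous or degenerate), so that the full strength of the equivalence of asymptotic flip conjugacy with flip conjugacy (via the rigidity result \cite{MaPre2017a}) genuinely applies.
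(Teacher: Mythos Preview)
Your proposal is correct and follows exactly the paper's approach: the corollary is stated as an immediate consequence of Theorem \ref{thm:asmpflip} together with Proposition \ref{prop:flipSFT}, and the paper gives no further argument. Your write-up in fact supplies more detail than the paper does; the only minor imprecision is in your explanation of the non-permutation hypothesis---its role is to ensure the groupoid $G_A^a\rtimes\Z$ is essentially principal (so that the $C^*$-algebraic characterizations via Renault's theorem in Theorem \ref{thm:asmpflip} apply), rather than merely to avoid a vacuous statement.
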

Hence the triplet
$(\R^a_A, C^*(G_A^a), C(\bar{X}_A))$
of $C^*$-subalgebras of $\R_A^a$ 
is a complete invariant of the flip conjugacy class of the two-sided topological Markov shift
$(\bar{X}_A, {\bar\sigma}_A)$.

\medskip

{\it Acknowledgments:}
This work was supported by JSPS KAKENHI 
Grant Numbers 15K04896, 19K03537.


\end{document}